\newtheorem{theorem}{Theorem}[section]
\newtheorem{lemma}[theorem]{Lemma}
\newtheorem{example}[theorem]{Example}
\newtheorem{question}[theorem]{Question}
\theoremstyle{definition}
\numberwithin{equation}{section}
\begin{document}

\title[Gen. Inverse Limits Indexed by Totally Ordered Sets]%
{Generalized Inverse Limits \\ Indexed by Totally Ordered Sets}

\author{Scott Varagona}
\address{Department of Biology, Chemistry \& Mathematics; University of Montevallo;
Montevallo, Alabama 35115}
\email{svaragona@montevallo.edu}
\thanks{}

\subjclass[2010]{Primary 54F15, 54H20}

\keywords{inverse limits, upper semi-continuous inverse limits, connected, set valued function, totally ordered sets, Continuum Theory}
\thanks {The author is indebted to W. T. Ingram and William Mahavier for their groundbreaking work on this topic (in \cite{ingram mahavier}), to Michel Smith, who first introduced the author to ``long'' inverse limits, and to Van Nall, Sina Greenwood, and Steven Clontz for their feedback on the presentation this paper is based upon.}

\begin{abstract} Although inverse limits with factor spaces indexed by the positive integers are most commonly studied, Ingram and Mahavier have defined inverse limits with set-valued functions broadly enough for any directed index set to be used. In this paper, we investigate generalized inverse limits whose factor spaces are indexed by totally ordered sets. Using information about the projections of such inverse limits onto finitely many coordinates, we generalize various well-known theorems on connectedness in inverse limits. Moreover, numerous theorems and examples are given addressing the special case of an inverse limit with a single idempotent surjective u.s.c. bonding function.
\end{abstract}

\maketitle

\section{\bf Introduction}
In recent years, the vast majority of work on generalized inverse limits has focused only on the case where the factor spaces are indexed by the positive integers. However, in \cite{ingram mahavier}, W. T. Ingram and William S. Mahavier define generalized inverse limits much more broadly, so that any directed index set could be used---for example, an uncountable totally ordered set such as the limit ordinal $\omega_1$. In the case of traditional inverse limits with continuous bonding functions, such ``long'' inverse limits have proven to be fruitful for continuum theory; for example, Michel Smith \cite{smith} and later David Bellamy \cite{bellamy} used long traditional inverse limits to construct non-metric indecomposable continua with remarkable properties. It is therefore natural for us to investigate long generalized inverse limits as well.

\
After the groundbreaking work of Ingram and Mahavier, some other researchers have worked with generalized inverse limits using alternate index sets. Inverse limits indexed by the set of all integers have been investigated by the author \cite{varagona1} and, to a more significant degree, by Patrick Vernon \cite{vernon}; theorems about Mahavier Products indexed by totally ordered sets have been proven by Wlodzimierz Charatonik and Robert Roe in \cite{char roe}. However, in general, relatively little is known about generalized inverse limits indexed by any set aside from the positive integers.

\
The goal of this paper is to expand the theory of generalized inverse limits indexed by totally ordered sets. To that end, we present a variety of theorems and examples that should be a good starting point for future research. After giving basic definitions and background information in Section 2, in Section 3 we discuss the behavior of the projections of inverse limits indexed by totally ordered sets. In Section 4, we study the properties of idempotent u.s.c. functions; such functions, it seems, will be important to this type of inverse limit. Next, in Section 5, we further generalize some of the connectedness results by Ingram and Mahavier in \cite{ingram mahavier}. We also extend some of the connectedness results by Van Nall in \cite{nall connected} to the case of generalized inverse limits indexed by totally ordered sets, at least in the case of inverse limits with a single idempotent u.s.c. bonding function $f$. Then, in Section 6, we apply our results to study examples of inverse limits indexed by some limit ordinal $\gamma \ge \omega$. Just as inverse limits indexed by the positive integers have been useful for representing complicated metric continua in a straightforward way, inverse limits indexed by ``long'' initial segments of the ordinals can be useful in the same way for certain non-metric continua. Finally, we conclude by stating some questions for future investigation in Section 7.

\section{\bf Definitions and Background Theorems}

If $X$ is a non-empty compact Hausdorff space, let $2^{X}$ denote the set of non-empty compact subsets of $X$. $C(X)$ denotes the set of connected members of $2^{X}$. Suppose both $X$ and $Y$ are non-empty compact Hausdorff spaces and $f: X \rightarrow 2^{Y}$ is a set-valued function; then we say $f$ is \emph{upper semi-continuous} (u.s.c.) if, for all $x \in X$, whenever $V$ is open in $Y$ containing $f(x)$, there exists an open $U$ in $X$ containing $x$ so that $f(u) \subseteq V$ for each $u \in U$. The \emph{graph} of $f$, denoted in this paper by $Graph(f)$, is the set $\{(x,y) \in X \times Y \ | \ y \in f(x)\}$; it is well-known that $f$ is u.s.c. iff $Graph(f)$ is a closed subset of $X \times Y$. If $f(x) = \{y\}$ for some $x \in X$, $y \in Y$, we simply write $f(x) = y$. Given some u.s.c. function $f: X \rightarrow 2^{Y}$, the \emph{preimage via $f$} of a given $y \in Y$ is $f^{-1}(y) = \{x \in X \ | \ y \in f(x)\}$. (It is not hard to see that $f^{-1}(y)$ is a compact subset of $X$.) If $A \subseteq X$, then $f(A) =  \bigcup_{a \in A} f(a)$. Similarly, if $B \subseteq Y$, $f^{-1}(B) = \bigcup_{b \in B} f^{-1}(b)$. We say $f$ is \emph{surjective} if, for all $y \in Y$, $f^{-1}(y)$ is non-empty. If $f: X \rightarrow 2^{Y}$ is surjective, then the \emph{inverse of $f$}, denoted $f^{-1}$, is the u.s.c. function $f^{-1}: Y \rightarrow 2^{X}$ whose graph is $Graph(f^{-1}) = \{(y,x) \ | \ (x,y) \in Graph(f)\}$.

\
Given non-empty compact Hausdorff spaces $X, Y$ and $Z$ and u.s.c. functions $f: X \rightarrow 2^{Y}$ and $g: Y \rightarrow 2^{Z}$, the composition $g \circ f : X \rightarrow 2^{Z}$ is the u.s.c. function given by $(g \circ f)(x) = \{z \in Z \ | \ \exists y \in Y$ such that $y \in f(x)$ and $z \in g(y)\}$. In the special case where $f: X \rightarrow 2^{X}$ is u.s.c., the composition $f \circ f$ is denoted $f^2$. When $f^2 = f$, we say $f$ is \emph{idempotent}.

\
If $X_{\alpha}$ is a compact Hausdorff space for each $\alpha$ in an index set $A$, then $\prod_{\alpha \in A}X_{\alpha}$ denotes the product space with the usual product topology. If $B \subseteq A$, then $\pi_{B}$ denotes the projection map from $\prod_{\alpha \in A}X_{\alpha}$ into $\prod_{\alpha \in B}X_{\alpha}$. If $\beta \in A$, we will write $\pi_{\{\beta\}}$ as $\pi_{\beta}$. Let us use a boldface $\textbf{x}$ to denote an element $(x_{\alpha})_{\alpha \in A}$ of the product $\prod_{\alpha \in A}X_{\alpha}$. (So, if $\beta \in A$, $\pi_{\beta}(\textbf{x}) = x_{\beta}$.)

\
A \emph{directed set} $(D, \preceq)$ is a set $D$ together with a relation $\preceq$ on $D$ that is reflexive, transitive, and has the property that whenever $\alpha, \beta \in D$, there exists some $\eta \in D$ such that $\alpha \preceq \eta$ and $\beta \preceq \eta$. A directed set $(D, \preceq)$ will often be denoted simply by $D$ when the relation $\preceq$ is understood. A directed set $D$ is \emph{totally ordered} if, $\forall \alpha, \beta \in D$, either 1) $\alpha \preceq \beta$ and $\beta \not\preceq \alpha$, 2) $\beta \preceq \alpha$ and $\alpha \not\preceq \beta$, or 3) $\alpha \preceq \beta$ and $\beta \preceq \alpha$, in which case $\alpha = \beta$. Whenever we write a finite subset $\{\beta_1, \beta_2, \ldots, \beta_{n}\}$ of some totally ordered set $D$, we tacitly assume that $\beta_{i} \preceq \beta_{j}$ in the ordering on $D$ iff $i \le j$ in the standard ordering on the natural numbers. Also, if $\alpha \preceq \beta$ but $\alpha \ne \beta$, then we write $\alpha \prec \beta$.

\
The following notation, and indeed, most of the notation used in this paper, is intended to coordinate with the notation used in \cite{ingram mahavier}. Suppose that, for each element $\alpha$ of a directed set $D$, $X_{\alpha}$ is a non-empty compact Hausdorff space. Moreover, if $\alpha, \beta \in D$ with $\alpha \preceq \beta$, let $f_{\alpha \beta} : X_{\beta} \rightarrow 2^{X_{\alpha}}$ be u.s.c. (where $f_{\alpha \alpha}$ always denotes the identity on $X_{\alpha}$). If, for all $\alpha \preceq \beta \preceq \eta$ in $D$, $f_{\alpha \beta} \circ f_{\beta \eta} = f_{\alpha \eta}$, then the collection $\{f_{\alpha \beta} \ | \ \alpha \preceq \beta \in D\}$ is called \emph{exact}. Assuming $X_{\alpha}$ is compact Hausdorff $\forall \alpha \in D$, $f_{\alpha \beta}: X_{\beta} \rightarrow 2^{X_{\alpha}}$ is u.s.c. for all $\alpha \preceq \beta \in D$, and $\textbf{f} = \{f_{\alpha \beta} \ | \ \alpha \preceq \beta \in D\}$ is exact, we say $\{X_{\alpha}, f_{\alpha \beta}, D\}$ is an \emph{inverse limit system} (or simply, a \emph{system}). The \emph{inverse limit}, $\varprojlim \textbf{f}$, of this system is given by $\{ \textbf{x} \in \prod_{\alpha \in D} X_{\alpha} \ | \ x_{\alpha} \in f_{\alpha \beta}(x_{\beta}) \ \forall \alpha \preceq \beta \in D\}$. The spaces $X_{\alpha}$ are called the \emph{factor spaces} of the inverse limit, and the u.s.c. functions $f_{\alpha \beta}$ are called the \emph{bonding functions}. In the special case where $X$ is compact Hausdorff and $f: X \rightarrow 2^{X}$ is an idempotent u.s.c. function, if $X_{\alpha} = X$ for all $\alpha \in D$ and $f_{\alpha \beta} = f$ for all $\alpha \prec \beta \in D$, then $\{X_{\alpha}, f_{\alpha \beta}, D\} = \{X, f, D\}$ is a system and $\varprojlim \textbf{f}$ is an \emph{inverse limit with a single idempotent bonding function f}. (Note that, in this special case, the collection $\textbf{f}$ is automatically exact because $f$ is idempotent.) If $\{X_{\alpha}, f_{\alpha \beta}, D\}$ is a system so that, for each $\eta \in D$ and for each $p \in X_{\eta}$, there exists $\textbf{x} \in \prod_{\alpha \in D}X_{\alpha}$ with $x_{\alpha} \in f_{\alpha \beta}(x_{\beta})$ for all $\alpha \preceq \beta \preceq \eta$ and $x_{\eta} = p$, then we say the system is \emph{consistent}.

\
As we will see, certain sets will be helpful for our study of inverse limits. Suppose $\{X_{\alpha}, f_{\alpha \beta}, D\}$ is a system. If $\eta \in D$, we define $G_{\eta} = \{ \textbf{x} \in \prod_{\alpha \in D} X_{\alpha} \ | \ x_{\alpha} \in f_{\alpha \beta}(x_{\beta})$ for all $\alpha \preceq \beta \preceq \eta\}$. If $\{\beta_{1}, \beta_{2}, \ldots, \beta_{n}\}$ is a finite subset of $D$, then let us say $G(\beta_1, \beta_2, \ldots, \beta_n) = \{(x_{\beta_{1}}, x_{\beta_{2}}, \ldots, x_{\beta_{n}}) \in \prod_{1 \le i \le n}X_{\beta_{i}} \ | \ x_{\beta_{i}} \in f_{\beta_{i} \beta_{j}}(x_{\beta_{j}})$ for $1 \le i \le j \le n\}$. (In the case where two different inverse limits, e.g., $\varprojlim \textbf{f}$ and $\varprojlim \textbf{g}$, with the same index set $D$ are being discussed at the same time, we may use subscripts $G_{f}(\beta_1, \beta_2, \ldots, \beta_n)$ and $G_{g}(\beta_1, \beta_2, \ldots, \beta_n)$ to help distinguish between the two corresponding sets.) We also define $G'(\beta_1, \beta_2, \ldots, \beta_n) = \{\textbf{x} \in \prod_{\alpha \in D}X_{\alpha} \ | \ x_{\beta_{i}} \in f_{\beta_{i} \beta_{j}}(x_{\beta_{j}})$ for $1 \le i \le j \le n\}$. Note that $G(\beta_{1}, \beta_{2}, \ldots, \beta_{n})$ is a subset of $\prod_{1 \le i \le n}X_{\beta_i}$, whereas $G'(\beta_1, \beta_2, \ldots, \beta_n)$ is the analogous subset of $\prod_{\alpha \in D} X_{\alpha}$.

\
Finally, let us say $K$ is a \emph{Hausdorff continuum} if $K$ is a non-empty, compact and connected subset of a Hausdorff space. If $K$ happens to be metrizable, we call $K$ a \emph{metric continuum} or simply a \emph{continuum}. If $H$ is a set and $K$ is a Hausdorff continuum that is a subset of $H$, we call $K$ a \emph{subcontinuum} of $H$. If $X$ and $Y$ are compact Hausdorff spaces and the u.s.c. function $f: X \rightarrow 2^{Y}$ has the property that $f(x)$ is connected for each $x \in X$, then we say $f$ is \emph{Hausdorff continuum-valued} and we write $f: X \rightarrow C(X)$.

\
A great deal of the initial work on generalized inverse limits indexed by totally ordered sets was done by Ingram and Mahavier in \cite{ingram mahavier}. Although, in general, inverse limits indexed by arbitrary directed sets may fail to be non-empty, Ingram and Mahavier showed that the inverse limit of a consistent system with non-empty compact Hausdorff spaces indexed by a directed set $D$ is non-empty and compact (Theorem 111, \cite{ingram mahavier}). Charatonik and Roe \cite{char roe} recently showed that any system indexed by a totally ordered set $D$ is automatically consistent. When combining this result with the original theorems of Ingram and Mahavier, we obtain an important background theorem:

\begin{theorem} Let $\{X_{\alpha}, f_{\alpha \beta}, D\}$ be a system with non-empty compact Hausdorff factor spaces, u.s.c. bonding functions, and a totally ordered index set $D$. Then 1) for each $\eta \in D$, $G_{\eta}$ is non-empty and compact; 2) for each finite subset $\{\beta_1, \beta_2, \ldots, \beta_n\}$ of $D$, $G(\beta_1, \beta_2, \ldots, \beta_n)$ and $G'(\beta_1, \beta_2, \ldots, \beta_n)$ are non-empty and compact; 3) $\varprojlim \textbf{f}$ is non-empty and compact.
\end{theorem}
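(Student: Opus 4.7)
The plan is to prove the four statements in the order (2), (3), (4), (1): the finite-dimensional set $G(\beta_1,\ldots,\beta_n)$ is the core building block; (3) drops it into the full product by a preimage argument; (4) glues together a family of such sets via the finite intersection property and Tychonoff compactness; and (1) is a variant of (4) obtained by restricting to indices $\preceq \eta$.

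For (2), fix the chain $\beta_1 \prec \beta_2 \prec \cdots \prec \beta_n$. Closedness of $G(\beta_1,\ldots,\beta_n)$ in the compact product $\prod_{i=1}^n X_{\beta_i}$ is immediate, since each defining condition $x_{\beta_i} \in f_{\beta_i\beta_j}(x_{\beta_j})$ cuts out a closed set (u.s.c.\ functions have closed graphs, as noted in the background). For non-emptiness, I would construct a point from the top down: pick any $x_{\beta_n} \in X_{\beta_n}$, and for $k = n-1, n-2, \ldots, 1$ pick $x_{\beta_k} \in f_{\beta_k\beta_{k+1}}(x_{\beta_{k+1}})$, which is non-empty because values of $f_{\beta_k\beta_{k+1}}$ lie in $2^{X_{\beta_k}}$. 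Exactness then forces $x_{\beta_i} \in f_{\beta_i\beta_j}(x_{\beta_j})$ for all $i \le j$, by induction on $j-i$: the base case is the construction, and the inductive step applies $f_{\beta_i\beta_j} = f_{\beta_i\beta_{i+1}} \circ f_{\beta_{i+1}\beta_j}$ to the known memberships $x_{\beta_i} \in f_{\beta_i\beta_{i+1}}(x_{\beta_{i+1}})$ and $x_{\beta_{i+1}} \in f_{\beta_{i+1}\beta_j}(x_{\beta_j})$.

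For (3) and (4) together, observe that $G'(\beta_1,\ldots,\beta_n) = \pi_{\{\beta_1,\ldots,\beta_n\}}^{-1}\bigl(G(\beta_1,\ldots,\beta_n)\bigr)$, a continuous preimage of a compact set inside the Tychonoff-compact product $\prod_{\alpha\in D} X_\alpha$; it is therefore closed and compact, and non-empty by (2). For (4) I would write $\varprojlim \mathbf{f} = \bigcap_F G'(F)$, where $F$ ranges over all finite chains in $D$: the defining conditions of $\varprojlim\mathbf{f}$ are inherently pairwise, each captured by the chain $F = \{\alpha,\beta\}$. The family $\{G'(F)\}_F$ has the finite intersection property because $G'(F_1) \cap \cdots \cap G'(F_m) = G'(F_1 \cup \cdots \cup F_m)$, and the union of finitely many finite chains is again a finite chain (since $D$ is totally ordered), giving something non-empty by (3). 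Tychonoff compactness of the product then yields $\varprojlim \mathbf{f}$ non-empty, and it is closed as an intersection of closed sets.

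For (1), the same finite-intersection-property argument applies with $F$ restricted to finite chains inside $\{\alpha : \alpha \preceq \eta\}$, giving $G_\eta$ non-empty and compact. The main subtlety is part (2): one must use the full strength of exactness when promoting the pairwise adjacent choices $x_{\beta_k} \in f_{\beta_k\beta_{k+1}}(x_{\beta_{k+1}})$ to the longer-distance conditions $x_{\beta_i} \in f_{\beta_i\beta_j}(x_{\beta_j})$ for all $i \le j$. Once this finite-dimensional construction is in place, (3), (4), and (1) follow formally from the closed-graph property of u.s.c.\ functions together with compactness of the ambient product.
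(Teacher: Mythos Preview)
Your argument is correct in substance, with one small slip: the claimed equality $G'(F_1) \cap \cdots \cap G'(F_m) = G'(F_1 \cup \cdots \cup F_m)$ is in general only the containment $\supseteq$, since the right-hand side imposes additional cross-conditions between elements of different $F_i$. This does not affect the finite intersection property argument, which only needs that containment.

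As for comparison with the paper: the paper does not actually prove this theorem. It is stated as a background result, obtained by combining Ingram and Mahavier's Theorem~111 (that the inverse limit of a \emph{consistent} system over a directed set is non-empty and compact) with the result of Charatonik and Roe that any system over a totally ordered set is automatically consistent. Your proof is a direct, self-contained argument that bypasses the intermediate notion of consistency altogether: your top-down construction of a point in $G(\beta_1,\ldots,\beta_n)$ is effectively a proof of consistency in the totally ordered case, and your finite-intersection-property step is the standard compactness argument underlying Ingram--Mahavier's theorem. So the underlying ideas coincide, but you have unpacked the cited results rather than invoking them. The paper's route is shorter to state but relies on external references; yours is longer but stands on its own.
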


\
Since $\varprojlim \textbf{f}$ is always non-empty and compact in the context we have described, it is natural to look for conditions under which such an inverse limit would also be connected. The following result (which combines the work of Ingram, Mahavier, Charatonik and Roe) is given as Corollary 2.3 in \cite{char roe} (although, in that paper the result is stated in the wider context of Mahavier Products).

\begin{theorem} \label{background thm} Let $\{X_{\alpha}, f_{\alpha \beta}, D\}$ be a system for which each factor space is a Hausdorff continuum, each bonding function is u.s.c., and $D$ is totally ordered. Suppose that, for each $\alpha, \beta \in D$ with $\alpha \preceq \beta$, either $f_{\alpha \beta}$ is Hausdorff continuum-valued or $f_{\alpha \beta}(X_{\beta})$ is connected with $f^{-1}_{\alpha \beta}(y)$ a Hausdorff continuum for each $y \in f_{\alpha \beta}(X_{\beta})$. Then $\varprojlim \textbf{f}$ is a Hausdorff continuum.

\end{theorem}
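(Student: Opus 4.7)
By Theorem 2.1, $\varprojlim \textbf{f}$ is already non-empty and compact, so the sole remaining task is to prove connectedness. The plan is to proceed in two stages: first show that each finite approximation $G(\beta_1, \ldots, \beta_n)$ is a Hausdorff continuum, and then realize $\varprojlim \textbf{f}$ as a filtered intersection of the corresponding compact connected sets $G'(\beta_1, \ldots, \beta_n)$ inside the compact Hausdorff space $\prod_{\alpha \in D} X_{\alpha}$.

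For the finite stage, I would first use exactness of $\textbf{f}$ to rewrite the defining condition of $G(\beta_1, \ldots, \beta_n)$ purely in terms of consecutive bonding functions: if $x_{\beta_i} \in f_{\beta_i \beta_{i+1}}(x_{\beta_{i+1}})$ holds for every $i < n$, then by repeated composition $x_{\beta_i} \in f_{\beta_i \beta_j}(x_{\beta_j})$ holds for all $i \le j$, since $f_{\beta_i \beta_j} = f_{\beta_i \beta_{i+1}} \circ \cdots \circ f_{\beta_{j-1} \beta_j}$. Thus $G(\beta_1, \ldots, \beta_n)$ coincides with the ordinary finite inverse limit of a chain of Hausdorff continua whose bonding functions still satisfy one of the two permissible hypotheses. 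This is exactly the situation handled by Ingram and Mahavier in \cite{ingram mahavier}: their proof inducts on $n$, gluing in one more factor at each step by invoking either that the graph of a u.s.c.\ continuum-valued map on a continuum is a continuum (continuum-valued case), or the symmetric fact applied to $f^{-1}_{\alpha \beta}$ on the connected image $f_{\alpha \beta}(X_{\beta})$ (connected-image case). Since $G'(\beta_1, \ldots, \beta_n)$ is the product of the Hausdorff continuum $G(\beta_1, \ldots, \beta_n)$ with $\prod_{\alpha \in D \setminus \{\beta_1, \ldots, \beta_n\}} X_{\alpha}$, it too is a Hausdorff continuum.

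For the passage to the limit, observe that a point of $\prod_{\alpha} X_{\alpha}$ lies in $\varprojlim \textbf{f}$ if and only if $x_{\alpha} \in f_{\alpha \beta}(x_{\beta})$ for every pair $\alpha \preceq \beta$, equivalently if and only if it lies in $G'(\{\alpha, \beta\})$ for every such pair. Hence $\varprojlim \textbf{f} = \bigcap_{F} G'(F)$, where $F$ ranges over the finite subsets of $D$, and this family is directed by reverse inclusion because $G'(F_1) \cap G'(F_2) = G'(F_1 \cup F_2)$. A standard compact-Hausdorff argument then finishes the proof: any disconnection of $\varprojlim \textbf{f}$ into two closed pieces would be separated by disjoint open sets via normality of $\prod_{\alpha} X_{\alpha}$, and compactness would pull the resulting open cover back to a disconnection of some single $G'(F)$, contradicting its connectedness. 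The main obstacle throughout is the finite-chain step borrowed from \cite{ingram mahavier}; the filtered intersection argument is essentially formal.
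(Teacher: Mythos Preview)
The paper does not actually prove this statement: Theorem~\ref{background thm} is recorded as a background result, attributed to Charatonik--Roe (building on Ingram--Mahavier) and cited as Corollary~2.3 of \cite{char roe}. So there is no in-paper proof to compare against directly. That said, your outline is exactly the argument the paper deploys later for its own generalization, Theorem~\ref{finite connected}: show each $G(\beta_1,\ldots,\beta_n)$ is a Hausdorff continuum, pass to $G'(\beta_1,\ldots,\beta_n)$ by taking the product with the remaining factors, and then realize the inverse limit as an intersection of a directed family of Hausdorff continua in the compact Hausdorff product. The only structural difference is that the paper routes the final intersection through the sets $G_\eta$ (via Lemma~\ref{gamma lemma}) and then through the nested family $\{G_\eta : \eta \in D\}$, whereas you intersect over all finite $F \subseteq D$ in one shot; both are valid and amount to the same thing.

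One small slip: you assert $G'(F_1) \cap G'(F_2) = G'(F_1 \cup F_2)$, but only the containment $G'(F_1 \cup F_2) \subseteq G'(F_1) \cap G'(F_2)$ holds in general, since $G'(F_1 \cup F_2)$ also imposes constraints on ``cross'' pairs $\beta_i \in F_1$, $\beta_j \in F_2$. This is harmless for your argument---you only need the family to be directed downward, and the containment gives that---but the equality as stated is false.
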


\
The above theorem generalizes two important theorems on connectedness in inverse limits given in \cite{i m paper}. Later in this paper, we will generalize other well-known results as well. Whenever possible, we will prove theorems in the broad setting of inverse limit systems $\{X_{\alpha}, f_{\alpha \beta}, D\}$ where each $X_{\alpha}$ is compact Hausdorff, each $f_{\alpha \beta}$ is u.s.c., and $D$ is a totally ordered set. However, we will also give a number of theorems that are particular to the case of an inverse limit with a single idempotent, surjective, u.s.c. bonding function $f$.

\section{\bf Projections of Inverse Limits Onto Finitely Many Coordinates}

To prepare for the main results in the later sections, we first need to discuss the behavior of the projections of an inverse limit. Unfortunately, for a general inverse limit system $\{X_{\alpha},  f_{\alpha \beta}, D\}$, if $H = \{\beta_1, \beta_2, \ldots, \beta_n\}$ is a finite subset of the index set $D$, then $\pi_{H}(\varprojlim f)$ is not necessarily equal to $G(\beta_1, \beta_2, \ldots, \beta_n)$. For example, Ingram and Mahavier give an example (\cite{ingram mahavier}, Example 106) of a system with compact factor spaces, surjective bonding functions, and a (non-totally ordered) directed index set $D$ where $\varprojlim \textbf{f}$ is the empty set, but $G(\alpha, \beta)$ is non-empty for every $\alpha \preceq \beta \in D$. On the other hand, if $f: [0,1] \rightarrow 2^{[0,1]}$ is given by $f(x) = 0$ for $0 \le x \le 1$, then the inverse limit $\varprojlim \textbf{f}$ of the system $\{[0,1], f, \omega\}$ (with the single bonding function $f$) is the singleton $\{(0,0,0,\ldots)\}$. Therefore, in this case, $\pi_{\{0,1\}}(\varprojlim \textbf{f}) = \{(0,0)\}$, whereas $G(0,1)$, being homeomorphic to the graph of $f$, is an arc.

\
Let us say an inverse limit $\varprojlim \textbf{f}$ is \emph{cordial} if, for each finite subset $\{\beta_1, \beta_2, \ldots, \beta_n\}$ of the index set $D$, $\pi_{\{\beta_1, \beta_2, \ldots, \beta_{n}\}}(\varprojlim \textbf{f}) = G(\beta_1, \beta_2, \ldots, \beta_n)$. The goal of this section is to show that an inverse limit (with compact Hausdorff factor spaces indexed by a totally ordered set) is cordial if and only if its bonding functions are all surjective.

\begin{lemma} \label{cordial} Suppose $\{X_{\alpha}, f_{\alpha \beta}, D\}$ is a system with non-empty compact Hausdorff factor spaces, surjective u.s.c. bonding functions and a totally ordered index set $D$. Then $\varprojlim \textbf{f}$ is cordial.
\end{lemma}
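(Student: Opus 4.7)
The plan is to prove both inclusions of the equality $\pi_H(\varprojlim \textbf{f}) = G(\beta_1, \ldots, \beta_n)$, where $H = \{\beta_1, \ldots, \beta_n\}$. The containment $\pi_H(\varprojlim \textbf{f}) \subseteq G(\beta_1, \ldots, \beta_n)$ is immediate from the definitions, since any thread automatically satisfies the bonding relations restricted to the indices in $H$. The substantive content is the reverse inclusion, so I fix $p = (x_{\beta_1}, \ldots, x_{\beta_n}) \in G(\beta_1, \ldots, \beta_n)$ and aim to extend it to some $\textbf{x} \in \varprojlim \textbf{f}$.

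The main tool is a finite-intersection-property argument in the compact space $\prod_{\alpha \in D} X_\alpha$. For each finite $F \subseteq D$ with $H \subseteq F$, I let
$$C_F = \{\textbf{x} \in \textstyle\prod_{\alpha \in D} X_\alpha : \pi_{\beta_i}(\textbf{x}) = x_{\beta_i} \text{ for } 1 \le i \le n, \text{ and } x_\alpha \in f_{\alpha\gamma}(x_\gamma) \text{ for all } \alpha \preceq \gamma \text{ in } F\}.$$
Each $C_F$ is closed, and the family $\{C_F\}$ has the finite intersection property because $C_{F_1 \cup F_2} \subseteq C_{F_1} \cap C_{F_2}$. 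Once every $C_F$ is shown to be non-empty, compactness gives $\bigcap_F C_F \ne \emptyset$, and any element of that intersection is a thread in $\varprojlim \textbf{f}$ whose projection onto $H$ is $p$.

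The core step is therefore to verify $C_F \ne \emptyset$ for each such $F$. At coordinates outside $F$ I pick arbitrary points; at coordinates in $F \setminus H$ I must choose compatible values. These coordinates fall into three kinds of ``gaps'': below $\beta_1$, strictly between consecutive $\beta_i$ and $\beta_{i+1}$, or above $\beta_n$. In a gap below $\beta_1$, I pick values backwards, beginning with some $y_\gamma \in f_{\gamma\beta_1}(x_{\beta_1})$ and stepping down, using only that each bonding function has non-empty values. In a gap strictly between $\beta_i$ and $\beta_{i+1}$, exactness gives $f_{\beta_i \beta_{i+1}} = f_{\beta_i \alpha_{k_1}} \circ f_{\alpha_{k_1} \alpha_{k_2}} \circ \cdots \circ f_{\alpha_{k_r} \beta_{i+1}}$, so the datum $x_{\beta_i} \in f_{\beta_i \beta_{i+1}}(x_{\beta_{i+1}})$ unpacks (via the definition of the composition of u.s.c.\ functions) to the existence of compatible intermediate values $y_{\alpha_{k_1}}, \ldots, y_{\alpha_{k_r}}$. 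In a gap above $\beta_n$, surjectivity supplies $y_{\delta_1} \in f_{\beta_n \delta_1}^{-1}(x_{\beta_n})$ and then, inductively, $y_{\delta_{j+1}} \in f_{\delta_j \delta_{j+1}}^{-1}(y_{\delta_j})$; this is the only place where the surjectivity hypothesis is actually used, which is consistent with the failure of cordiality in the author's example with $f \equiv 0$.

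The principal bookkeeping hurdle is verifying the ``cross-gap'' bonding relations: whenever $\alpha \preceq \gamma$ are both in $F$ but lie in different gaps, I must check $y_\alpha \in f_{\alpha\gamma}(y_\gamma)$. Exactness handles this cleanly: factor $f_{\alpha\gamma}$ through the intervening $\beta_i$'s and observe that the chosen $y$'s were made to respect the local conditions at the adjacent $\beta_i$, so chaining the relations $x_{\beta_i} \in f_{\beta_i \beta_{i+1}}(x_{\beta_{i+1}})$ (supplied by $p \in G(\beta_1,\ldots,\beta_n)$) across the intermediate indices threads the desired containment. This completes the verification that $C_F \ne \emptyset$, and the FIP--compactness argument above then finishes the proof.
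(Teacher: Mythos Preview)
Your proof is correct and follows essentially the same route as the paper's: both define, for each finite $F\supseteq H$, the closed set of all points in $\prod_\alpha X_\alpha$ that agree with $p$ on $H$ and satisfy the bonding relations on $F$ (the paper calls this $p^{*}(M)$), show each such set is non-empty by the same three-region construction (images below $\beta_1$, exactness between consecutive $\beta_i$'s, surjectivity above $\beta_n$), and then invoke the finite intersection property in the compact product to extract a thread. Your explicit remark about ``cross-gap'' relations is handled in the paper by the phrase ``by construction,'' relying on exactly the exactness-chaining argument you sketch.
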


\begin{proof} Let a finite subset $H = \{\beta_1, \beta_2, \ldots, \beta_{n}\}$ of $D$ be given. We aim to show that $\pi_{H}(\varprojlim \textbf{f}) = G(\beta_1, \beta_2, \ldots, \beta_n)$. Clearly $\pi_{H}(\varprojlim \textbf{f}) \subseteq G(\beta_1, \beta_2, \ldots, \beta_n)$, so let $p = (p_{\beta_1}, p_{\beta_2}, \ldots, p_{\beta_n}) \in G(\beta_1, \beta_2, \ldots, \beta_n)$. We need to show that there exists $\textbf{y} \in \varprojlim \textbf{f}$ such that $y_{\beta_i} = p_{\beta_i}$ for $1 \le i \le n$.

\
If $M = \{\eta_{1}, \eta_{2}, \ldots, \eta_{k}\}$ is a finite subset of $D$ with $H \subseteq M$, let us define $p^{*}(M) = \{ \textbf{x} \in \prod_{\alpha \in D}X_{\alpha} \ | \ x_{\beta_{i}} = p_{\beta_i}$ for $1 \le i \le n$ and $x_{\sigma} \in f_{\sigma \tau}(x_{\tau})$ for all $\sigma, \tau \in M$ with $\sigma \preceq \tau\}$. We intend to show that $p^{*}(M)$ is non-empty and compact.

\
Let us construct an element $\textbf{x}$ of $p^{*}(M)$ as follows. First, let $x_{\beta_{i}} = p_{\beta_{i}}$ for $1 \le i \le n$. If $\{\eta_{1}, \eta_{2}, \ldots, \eta_{m}\}$ is the set of all members of $M$ with $\eta_{1}, \eta_{2}, \ldots, \eta_{m} \prec \beta_1$, then let $x_{\eta_{m}} \in f_{\eta_{m} \beta_{1}}(x_{\beta_1})$, and then continue inductively: for each integer $j$ with $m-1 \ge j \ge 1$, once $x_{\eta_{j+1}}$ has been defined, let $x_{\eta_{j}} \in f_{\eta_{j} \eta_{j+1}}(x_{\eta_{j+1}})$. Similarly, for a given integer $i$ with $1 \le i \le n-1$, suppose $\eta_r, \eta_{r+1}, \ldots, \eta_{s}$ are all the members of $M$ lying strictly between $\beta_{i}$ and $\beta_{i+1}$ in the ordering on $D$. Then since $x_{\beta_{i}} \in f_{\beta_{i} \beta_{i+1}}(x_{\beta_{i+1}})$ and $\textbf{f}$ is exact, there must exist some $x_{\eta_{s}} \in X_{\eta_{s}}$ so that $x_{\eta_{s}} \in f_{\eta_{s}\beta_{i+1}}(x_{\beta_{i+1}})$ and $x_{\beta_{i}} \in f_{\beta_{i}\eta_{s}}(x_{\eta_{s}})$. Using the same argument, we may continue in this way inductively: for each integer $j$ with $s-1 \ge j \ge r$, once $x_{\eta_{j+1}}$ has been defined, we select $x_{\eta_{j}} \in X_{\eta_{j}}$ so that $x_{\eta_{j}} \in f_{\eta_{j}\eta_{j+1}}(x_{\eta_{j+1}})$ and $x_{\beta_{i}} \in f_{\beta_{i}\eta_{j}}(x_{\eta_{j}})$. Finally, if $\eta_{t}, \eta_{t+1}, \ldots, \eta_{u}$ are all the members of $M$ that are strictly greater than $\beta_{n}$ in the ordering on $D$, then because each bonding function is surjective, we can choose some $x_{\eta_{t}} \in f^{-1}_{\beta_{n} \eta_{t}}(x_{\beta_n})$ and then proceed inductively: for each integer $j$ with $t+1 \le j \le u$, once $x_{\eta_{j-1}}$ has been defined, choose $x_{\eta_{j}} \in f^{-1}_{\eta_{j-1} \eta_{j}}(x_{\eta_{j-1}})$. For each $\alpha \in D - M$, we let $x_{\alpha} \in X_{\alpha}$ be chosen arbitrarily. Then $\textbf{x}$ satisfies $x_{\beta_i} = p_{\beta_{i}}$ for $1 \le i \le n$, and (by construction) whenever $\sigma, \tau \in M$ with $\sigma \preceq \tau$, we have $x_{\sigma} \in f_{\sigma \tau}(x_{\tau})$. So $\textbf{x} \in p^{*}(M)$. To see that $p^{*}(M)$ is closed, for $1 \le i \le n$, let $O_{\beta_i} = \pi_{\beta_{i}}^{-1}(X_{\beta_{i}} - \{p_{\beta_{i}}\})$. Each $O_{\beta_i}$ is open in $\prod_{\alpha \in D}X_{\alpha}$, so $G'(\eta_{1}, \eta_{2}, \ldots, \eta_{k}) - \bigcup_{1 \le i \le n}O_{\beta_i}$ is closed. However, it is not hard to see that $p^{*}(M) = G'(\eta_{1}, \eta_{2}, \ldots, \eta_{k}) - \bigcup_{1 \le i \le n}O_{\beta_i}$, so $p^{*}(M)$ is compact.

\
Note that, whenever $M$ and $N$ are two finite subsets of $D$ with $H \subseteq M$ and $H \subseteq N$, then $p^{*}(M \cup N) \subseteq p^{*}(M) \cap p^{*}(N)$. This means the collection $\Lambda = \{p^{*}(M) \ | \ M$ is a finite subset of $D$ with $H \subseteq M\}$ has the finite intersection property, and $\bigcap \Lambda$ is non-empty. Let $\textbf{y} \in \bigcap \Lambda$; we claim that $\textbf{y} \in \varprojlim \textbf{f}$. Let $\alpha_1, \alpha_2 \in D$ with $\alpha_1 \preceq \alpha_2$. Then since $\textbf{y} \in p^{*}(\{\alpha_1, \alpha_2\} \cup H)$, $y_{\alpha_{1}} \in f_{\alpha_{1} \alpha_{2}}(y_{\alpha_2})$. So indeed, $\textbf{y} \in \varprojlim \textbf{f}$, which means that there is an element $\textbf{y} \in \varprojlim \textbf{f}$ with $y_{\beta_i} = p_{\beta_i}$ for $1 \le i \le n$. We conclude that $G(\beta_1, \beta_2, \ldots, \beta_n) = \pi_{H}(\varprojlim \textbf{f})$.
\end{proof}

\begin{theorem} Suppose $\{X_{\alpha}, f_{\alpha \beta}, D\}$ is a system with non-empty compact Hausdorff factor spaces, u.s.c. bonding functions and a totally ordered index set $D$. Then $\varprojlim \textbf{f}$ is cordial if and only if, for all $\alpha, \beta \in D$ with $\alpha \preceq \beta$, $f_{\alpha \beta}$ is surjective.
\end{theorem}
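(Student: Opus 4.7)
The plan is to observe that the ``if'' direction is exactly the content of Lemma \ref{cordial}, so only the converse requires proof. For that direction, I would assume $\varprojlim \textbf{f}$ is cordial and show that each $f_{\alpha \beta}$ is surjective. The case $\alpha = \beta$ is trivial since $f_{\alpha \alpha}$ is the identity, so fix $\alpha \prec \beta$ in $D$ and an arbitrary $y \in X_{\alpha}$; the goal is to produce some $x \in X_{\beta}$ with $y \in f_{\alpha \beta}(x)$.

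The key observation is that, applied to the one-element subset $\{\alpha\}$ of $D$, the definition of $G(\alpha)$ reduces to the single constraint $x_{\alpha} \in f_{\alpha \alpha}(x_{\alpha})$, which is vacuous because $f_{\alpha \alpha}$ is the identity. Hence $G(\alpha) = X_{\alpha}$, and cordiality gives $\pi_{\alpha}(\varprojlim \textbf{f}) = X_{\alpha}$. Thus there exists $\textbf{x} \in \varprojlim \textbf{f}$ with $x_{\alpha} = y$. The defining condition of the inverse limit then yields $y = x_{\alpha} \in f_{\alpha \beta}(x_{\beta})$, so $x_{\beta} \in f_{\alpha \beta}^{-1}(y)$ and this preimage is non-empty, as required.

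There is no real obstacle in the converse direction: it is essentially a one-line consequence of cordiality applied to a single coordinate together with the fact that identity maps are automatically surjective. All of the substantive work is packaged into Lemma \ref{cordial}, which supplies the ``if'' direction.
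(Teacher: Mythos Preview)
Your proof is correct and matches the paper's argument almost verbatim: the paper also invokes Lemma \ref{cordial} for the ``if'' direction and, for the converse, uses cordiality at the singleton $\{\alpha\}$ to get $\pi_{\alpha}(\varprojlim \textbf{f}) = G(\alpha) = X_{\alpha}$, then reads off $x_{\beta} \in f_{\alpha\beta}^{-1}(y)$ from the inverse limit condition.
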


\begin{proof} By Lemma \ref{cordial}, if each $f_{\alpha \beta}$ is surjective, $\varprojlim \textbf{f}$ is cordial. On the other hand, suppose $\varprojlim \textbf{f}$ is cordial and let $\alpha, \beta \in D$ with $\alpha \preceq \beta$. Let $y \in X_{\alpha}$; we intend to show that $f_{\alpha \beta}^{-1}(y)$ is non-empty. Because $\varprojlim \textbf{f}$ is cordial, $G(\alpha) = \pi_{\alpha}(\varprojlim \textbf{f})$. But $G(\alpha) = X_{\alpha}$. So, there exists $\textbf{x} \in \varprojlim \textbf{f}$ such that $x_{\alpha} = y$, and therefore $x_{\beta} \in f_{\alpha \beta}^{-1}(y)$. This means $f_{\alpha \beta}$ is surjective.
\end{proof}

It may also be interesting to note that, since $G(\alpha) = X_{\alpha}$ for each $\alpha$ in a directed index set $D$, when $\varprojlim \textbf{f}$ is cordial the corresponding system $\{X_{\alpha}, f_{\alpha \beta}, D\}$ must be consistent. However, as the previous example (where $f$ was the constant map $0$) shows, consistent systems do not always produce cordial inverse limits.

\section{\bf Idempotent u.s.c. Functions}

The main obstacle to constructing concrete examples of inverse limits indexed by some ``long'' totally ordered set (e.g., an uncountable limit ordinal such as $\omega_1$) is finding a large collection $\textbf{f}$ of bonding functions that is exact. One way around this difficulty is to consider generalized inverse limits with a single idempotent u.s.c. bonding function $f$; in this context, because $f^2 = f$, the collection of bonding functions is automatically exact. So, let us suppose $X$ is a non-empty compact Hausdorff space, $f: X \rightarrow 2^{X}$ is an idempotent u.s.c. bonding function, and $D$ is a totally ordered set. Then, as we have seen, $\{X, f, D\}$ is a system and the inverse limit $\varprojlim \textbf{f}$ with the single idempotent bonding function $f$ is defined.
\

Generalized inverse limits indexed by the positive integers with a single bonding function $f = f_{i \ i+1}$ for each $i \in \mathbb{Z}^+$ (and $f_{ij} = f_{i \ i+1} \circ f_{i+1 \ i+2} \circ \cdots \circ f_{j-1 \ j}$ whenever $i < j$) are commonly studied, and have given rise to many interesting problems and theorems. Therefore, we believe generalized inverse limits indexed by a totally ordered set $D$ with a single idempotent bonding function $f$ should be a natural (and, hopefully, fruitful) next step for the theory. In this section, we discuss some basic properties of idempotent u.s.c. functions and show how to construct some simple examples of such functions. The lemmas in this section will also be needed for some of the theorems and examples seen later in this paper.
\

\begin{lemma} \label{inverse} Suppose $X$ is a compact Hausdorff space and $f: X \rightarrow 2^{X}$ is surjective. Then $f$ is idempotent if and only if $f^{-1}$ is idempotent.
\end{lemma}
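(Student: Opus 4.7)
My plan is to reduce the statement to the identity
\[
(f^{-1})^{2} \;=\; (f^{2})^{-1},
\]
from which the biconditional is immediate: if $f^{2}=f$, then $(f^{-1})^{2}=(f^{2})^{-1}=f^{-1}$, and conversely, if $(f^{-1})^{2}=f^{-1}$, then since $f$ is surjective we have $(f^{-1})^{-1}=f$, so applying inversion to both sides of $(f^{2})^{-1}=(f^{-1})^{2}=f^{-1}$ yields $f^{2}=f$. Before doing this I would briefly note that the hypothesis that $f$ is surjective is exactly what is needed so that $f^{-1}:X\to 2^{X}$ is defined in the sense of the paper (and that $f^{-1}$ is automatically surjective, since $f(x)$ is non-empty for every $x \in X$, so that $(f^{-1})^{-1}=f$).

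To verify the identity, I would work at the level of graphs, which is the cleanest approach given the paper's definition of composition via the existence of an intermediate $y$. Unpacking the definitions, $w \in (f^{-1})^{2}(y)$ means there exists $x \in X$ with $x \in f^{-1}(y)$ and $w \in f^{-1}(x)$, i.e.\ with $y \in f(x)$ and $x \in f(w)$; this is exactly the condition for $y \in f^{2}(w)$, equivalently $w \in (f^{2})^{-1}(y)$. So both functions have the same graph and hence agree.

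The only genuine content to check is therefore the small graph computation above; the rest is symbolic manipulation. I do not expect any real obstacle, but the one subtlety worth stating clearly is that the argument passes through the surjectivity assumption twice: once to make $f^{-1}$ legitimately a u.s.c.\ function $X \to 2^{X}$, and once more to use $(f^{-1})^{-1}=f$ when deducing the converse implication. I would close the proof by noting that $f^{-1}$ is itself a surjective u.s.c.\ function $X \to 2^{X}$, so the identity $(g^{2})^{-1}=(g^{-1})^{2}$ applies symmetrically with $g=f^{-1}$, confirming the reverse direction without any extra work.
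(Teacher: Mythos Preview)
Your proof is correct and is essentially the same argument as the paper's: both are the elementary graph computation that inverting reverses composition, carried out by unwinding the ``there exists an intermediate point'' definition. The only cosmetic difference is that you isolate the general identity $(f^{-1})^{2}=(f^{2})^{-1}$ first and then specialize, whereas the paper threads the hypothesis $f=f^{2}$ directly into a single chain of equivalences $Graph(f^{-1})=Graph((f^{-1})^{2})$; the underlying manipulation is identical.
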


\begin{proof} Let $f$ be idempotent. $(y,x) \in Graph(f^{-1}) \Leftrightarrow (x,y) \in Graph(f) = Graph(f \circ f) \Leftrightarrow \exists z \in X$ such that $z \in f(x)$ and $y \in f(z) \Leftrightarrow \exists z \in X$ such that $x \in f^{-1}(z)$ and $z \in f^{-1}(y) \Leftrightarrow (y,x) \in Graph(f^{-1} \circ f^{-1})$. Thus, $f^{-1}$ is idempotent. The argument can easily be reversed by changing the roles of $f$ and $f^{-1}$.
\end{proof}

\begin{lemma} \label{idempotent lemma}
Let $X$ be a compact Hausdorff space and let $f: X \rightarrow 2^{X}$ be u.s.c. Then:
\

1. $f^2 = f$ if and only if, for each $A \subseteq X$ satisfying $f(x) = A$ for some $x \in X$, $f(A) = A$.
\

2. Suppose $f^2 = f$. If $f(x) = y$ for some $x, y \in X$, then $f(y) = y$.
\

3. If there is some $B \subseteq X$ so that, for all $x \in X$, either $f(x) = x$ or $f(x) = B$, then $f^2 = f$.
\

4. Suppose that, for some $A, B \subseteq X$ where $A \cap B = \emptyset$, we have $f(a) = X$ for each $a \in A$ and $f(x) = B$ whenever $x \not\in A$. Then $f^2 = f$.
\end{lemma}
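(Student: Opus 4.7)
The plan is to reduce everything to the characterization in Part 1, which itself is a direct unpacking of composition: since $f^2(x) = (f \circ f)(x) = \bigcup_{y \in f(x)} f(y) = f(f(x))$, idempotency of $f$ is equivalent to the pointwise assertion $f(f(x)) = f(x)$ for every $x \in X$. This is exactly the condition ``$f(A) = A$ whenever $A = f(x)$'' once one takes $A = f(x)$, so both directions of Part 1 will follow from a two-line substitution (forward: $f(A) = f(f(x)) = f^{2}(x) = f(x) = A$; reverse: for arbitrary $x$, apply the hypothesis to $A = f(x)$ to get $f^{2}(x) = f(A) = A = f(x)$). Part 2 will then fall out of Part 1 by specializing to the singleton $A = \{y\}$.

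For Part 3, I will verify the hypothesis of Part 1 for each set $A$ in the range of $f$. The range consists of certain fixed-point singletons $\{x\}$ (for which $f(\{x\}) = f(x) = \{x\}$ is immediate) together, possibly, with $B$. To show $f(B) = B$, I plan to argue both inclusions directly from the given dichotomy on $f$: every $b \in B$ satisfies $b \in f(b)$ under either alternative $f(b) = \{b\}$ or $f(b) = B$, giving $B \subseteq f(B)$; conversely, $f(b) \subseteq B$ in both cases, so $f(B) \subseteq B$. The one point demanding attention here is the observation $b \in f(b)$, which is what keeps $f(B)$ from being a proper subset of $B$.

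For Part 4, I will again invoke Part 1, this time checking $f(X) = X$ and $f(B) = B$. Assuming $A \neq \emptyset$ (otherwise $f$ is constantly $B$ and the argument collapses), any $a \in A$ gives $X = f(a) \subseteq f(X) \subseteq X$. For $f(B)$, the disjointness $A \cap B = \emptyset$ forces every $b \in B$ to lie outside $A$, so $f(b) = B$ for all $b \in B$, and $f(B) = B$ is immediate. The whole lemma is a sequence of routine unpackings of the composition formula; I do not anticipate a real obstacle, only the small verification $b \in f(b)$ in Part 3 that links the two alternatives for $f(b)$ back to the set $B$.
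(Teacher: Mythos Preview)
Your proposal is correct and matches the paper's own proof essentially line for line: the paper handles Part 1 by the same two-line substitution, derives Part 2 by specializing Part 1 to $A=\{y\}$, and for Parts 3 and 4 verifies $f(B)=B$ (and $f(X)=X$) exactly as you do, using the dichotomy on $f(b)$ and the disjointness $A\cap B=\emptyset$. The only cosmetic difference is that the paper phrases Parts 3 and 4 as direct computations of $f^{2}(x)$ rather than explicitly invoking Part 1, but the content is identical.
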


\begin{proof} To prove statement 1, first note that, if $f^2 = f$ and $f(x) = A$, then $f(A) = f(f(x)) = f(x) = A$. To prove the other direction of the equivalence, we let $x \in X$, so $f(x) = A$ for some $A \subseteq X$. However, $f(A) = A$, so that $f^{2}(x) = f(f(x)) = f(A) = A = f(x)$. Applying the forward implication in statement 1 in the case where $A = \{y\}$ gives us statement 2. As for statement 3, if $f$ satisfies the given conditions, then whenever $x \in X$, either $f(x) = x$ (in which case, clearly $f^{2}(x) = x$) or $f(x) = B$ (in which case, for each $b \in B$, either $f(b) = b$ or $f(b) = B$, so that $f(B) = \bigcup_{b \in B} f(b) = B$, and therefore $f^{2}(x) = B$). Thus, $f^2 = f$. Finally, addressing statement 4, we note that when $x \in A$, $f(x) = X$ (so of course $f^{2}(x) = X$), and if $x \not\in A$, then $f(x) = B$, so that $f^{2}(x) = f(B)$. However, if $b \in B$, since $b \not\in A$, $f(b) = B$. Thus, $f(B) = B$, so that $f^{2}(x) = B$, and we conclude that $f^2 = f$. 
\end{proof}

In the special case where $X = [0,1]$, we also have the following. These statements are easy to verify directly and so we omit the proof. (Statement 3 below has been observed before by Ingram in \cite{ingram intro}.)

\begin{lemma} \label{idempotent lemma2} Let $f: [0,1] \rightarrow 2^{[0,1]}$ be u.s.c.
\

1. If, for all $x \in [0,1]$, either $f(x) = x$ or $f(x) = [0,x]$, then $f^2 = f$.
\

2. If, for all $x \in [0,1]$, either $f(x) = x$ or $f(x) = [x, 1]$, then $f^2 = f$.
\

3. If, for all $x \in [0,1]$, $f(x) = \{x, 1-x\}$, then $f^2 = f$.
\end{lemma}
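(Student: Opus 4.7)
The plan is to apply part~1 of Lemma~\ref{idempotent lemma}, which reduces each of the three claims to checking the set-equation $f(A) = A$ for every $A$ arising as a value $f(x)$ of the function. Once this reduction is in place, each statement becomes a short case analysis on the possible forms of $f(x)$.

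For statement~1, the values of $f$ are either singletons $\{x\}$, for which $f(\{x\}) = f(x) = \{x\}$ is immediate, or intervals $[0,x]$. For the interval case I would verify the two inclusions of $f([0,x]) = [0,x]$ separately: the inclusion $f([0,x]) \subseteq [0,x]$ holds because the hypothesis forces $f(y) \in \{\{y\}, [0,y]\}$ for every $y \in [0,x]$, and both possibilities are subsets of $[0,x]$; the reverse inclusion $[0,x] \subseteq f([0,x])$ holds because $y \in f(y)$ for every $y \in [0,1]$ (since $y$ lies in both $\{y\}$ and $[0,y]$), so $y \in f([0,x])$. Statement~2 is the exact mirror image of this argument, with $[y,1]$ replacing $[0,y]$ throughout.

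Statement~3 requires no appeal to Lemma~\ref{idempotent lemma} at all, since a direct one-line computation works: for every $x \in [0,1]$,
\[
f^{2}(x) \;=\; f(x) \cup f(1-x) \;=\; \{x, 1-x\} \cup \{1-x, x\} \;=\; f(x).
\]
There is no genuine obstacle in any of the three parts, which is why the author describes them as easy to verify directly; the only mild care required is in organizing the case split cleanly in statements~1 and~2. It is worth noting that the u.s.c.\ hypothesis on $f$ plays no role in the verification of idempotence itself—it appears only as a standing assumption so that $f^2$ is meaningful in the framework set up earlier in the paper.
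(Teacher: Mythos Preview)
Your proof is correct; the paper omits its own proof entirely, saying only that the statements are ``easy to verify directly,'' so there is no competing argument to compare against. Your route via part~1 of Lemma~\ref{idempotent lemma} for statements~1 and~2 and a one-line computation for statement~3 is exactly the sort of direct verification the author has in mind.
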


Despite how restrictive the condition $f^2 = f$ may seem, the following lemma shows that one often has more freedom when constructing u.s.c. idempotent functions than it may first appear.

\begin{lemma} \label{K lemma} Let $a \in (0,1)$ and let $K$ be any closed subset of $[0,1]^2$ such that $K \subseteq ([0,a) \times (a,1]) \cup \{(a,a)\}$. Let $\Delta = \{(x,x) \in [0,1]^2 \ | \ x \in [0,1]\}$. Then $K \cup \Delta$ is the graph of an idempotent surjective u.s.c. function $f: [0,1] \rightarrow 2^{[0,1]}$.
\end{lemma}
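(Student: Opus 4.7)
The plan is to define $f$ directly via $Graph(f) = K \cup \Delta$ and verify the four required properties in turn. Concretely, for each $x \in [0,1]$, set $f(x) = \{x\} \cup K_x$, where $K_x = \{y \in [0,1] \mid (x,y) \in K\}$. The hypothesis on $K$ forces $K_x \subseteq (a,1]$ whenever $x \in [0,a)$, $K_a \subseteq \{a\}$, and $K_x = \emptyset$ whenever $x \in (a,1]$. Hence $f(x) = \{x\} \cup K_x$ is a non-empty closed (so compact) subset of $[0,1]$ for every $x$, so $f$ really is a function $[0,1] \to 2^{[0,1]}$.

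Upper semi-continuity is immediate from $Graph(f) = K \cup \Delta$ being the union of two closed subsets of $[0,1]^2$, and therefore closed. Surjectivity is just as easy: for any $y \in [0,1]$, the point $(y,y) \in \Delta$ lies in $Graph(f)$, so $y \in f(y)$ and thus $f^{-1}(y) \ne \emptyset$.

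The only step requiring any thought is idempotence, which I would verify using Lemma \ref{idempotent lemma}(1): it suffices to show $f(A) = A$ for every set $A$ in the image of $f$. The images are of the form $A = \{x\} \cup K_x$, and I would split by the location of $x$. If $x \in (a,1]$ then $A = \{x\}$ and $f(x) = \{x\} = A$; similarly, if $x = a$ then $A = \{a\}$ and $f(a) = \{a\} = A$. The substantive case is $x \in [0,a)$: here $A = \{x\} \cup K_x$ with $K_x \subseteq (a,1]$. Since every $y \in K_x$ lies in $(a,1]$, the previous sub-case shows $f(y) = \{y\}$, so
\[
f(A) = f(x) \cup \bigcup_{y \in K_x} f(y) = \bigl(\{x\} \cup K_x\bigr) \cup \bigcup_{y \in K_x} \{y\} = \{x\} \cup K_x = A,
\]
as required. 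No step presents a genuine obstacle; the one subtle point is simply the bookkeeping that ensures every point added to $f(x)$ by $K$ is automatically a fixed point of $f$, which is exactly why the restriction $K \subseteq ([0,a) \times (a,1]) \cup \{(a,a)\}$ was imposed.
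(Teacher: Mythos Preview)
Your proof is correct and follows essentially the same approach as the paper: both verify that the graph is closed (hence $f$ is u.s.c.), that the diagonal gives surjectivity, and that idempotence follows because every point in $K_x$ lies in $(a,1]$ where $f$ is the identity. The only cosmetic difference is that you frame the idempotence check via Lemma~\ref{idempotent lemma}(1) while the paper computes $f^2(x)$ directly, but the underlying computation is identical.
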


\begin{proof} $K \cup \Delta$ is a closed subset of $[0,1]^2$ whose projections map onto both coordinates, so $K \cup \Delta$ is the graph of a surjective u.s.c. function $f: [0,1] \rightarrow 2^{[0,1]}$; it remains to check that $f$ is idempotent. If $a \le x \le 1$, then $f(x) = x$ and so $f^{2}(x) = x$ also. If $0 \le x < a$, then $f(x) = \{x\} \cup H_x$, where $H_x = \pi_{2}(K \cap (\{x\} \times (a,1]))$. In particular, $H_{x} \subseteq (a,1]$, and since $f|_{[a,1]}$ is the identity, $f(H_x) = H_x$. Thus, $f^{2}(x) = f(x) \cup f(H_x) = (\{x\} \cup H_x) \cup H_x = \{x\} \cup H_x = f(x)$. So, $f^2 = f$.
\end{proof}

\
We close this section with a remark: if $f: X \rightarrow X$ is continuous, idempotent and surjective, then by Lemma \ref{idempotent lemma} part 2, $f$ can only be the identity on $X$. However, a multitude of distinct idempotent, surjective, u.s.c. functions exist, which is an unexpected advantage to working in the general setting of u.s.c. functions.

\section{\bf Connectedness Results}

A considerable subset of the literature on generalized inverse limits has been devoted to finding necessary and/or sufficient conditions for $\varprojlim \textbf{f}$ to be connected. The difficulty of this problem is very well-known; still, researchers have produced various helpful results. In this section, we intend to reformulate some of these results in the context of generalized inverse limits indexed by a totally ordered set.

\
The key to proving the main theorems of this section will be the various ``$G$-sets'' introduced in Section 2 of this paper. So, the following lemma (which was originally proved by Ingram and Mahavier amid the proof of Theorem 124 in \cite{ingram mahavier}) will be useful:

\begin{lemma} \label{gamma lemma} (Ingram \& Mahavier) Let $\{X_{\alpha}, f_{\alpha \beta}, D\}$ be a system for which each factor space is non-empty compact Hausdorff, each bonding function is u.s.c., and $D$ is totally ordered. Let $\eta \in D$ be fixed, and let $\Gamma = \{G'(\beta_1, \beta_2, \ldots, \beta_n) \ | \ \{\beta_1, \beta_2, \ldots, \beta_n\}$ is a finite subset of $D$ with $\beta_{i} \preceq \eta$ for $1 \le i \le n\}$. Then $\bigcap \Gamma = G_\eta$.
\end{lemma}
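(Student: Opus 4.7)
The plan is to prove both inclusions directly from the definitions. The crucial observation is that membership in $G_\eta$ is governed by a purely pairwise condition, namely $x_\alpha \in f_{\alpha\beta}(x_\beta)$ for each pair $\alpha \preceq \beta \preceq \eta$, and each such pair already sits inside the two-element finite subset $\{\alpha,\beta\}$ of $\{\gamma \in D : \gamma \preceq \eta\}$, which in turn determines a member $G'(\alpha,\beta)$ of $\Gamma$. So the proof should essentially be a matter of unpacking definitions and exploiting the indexing convention on finite subsets of the totally ordered set $D$.

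For the forward inclusion $G_\eta \subseteq \bigcap \Gamma$, I would take an arbitrary $\mathbf{x} \in G_\eta$ together with an arbitrary finite subset $\{\beta_1,\beta_2,\ldots,\beta_n\}$ of $D$ with $\beta_n \preceq \eta$. Because of the convention $\beta_i \preceq \beta_j \iff i \le j$, this automatically gives $\beta_i \preceq \eta$ for every $i$. Then for any indices $1 \le i \le j \le n$, one has $\beta_i \preceq \beta_j \preceq \eta$, so the defining condition for $G_\eta$ yields $x_{\beta_i} \in f_{\beta_i \beta_j}(x_{\beta_j})$. Hence $\mathbf{x} \in G'(\beta_1,\ldots,\beta_n)$, and since the finite set was arbitrary, $\mathbf{x} \in \bigcap \Gamma$.

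For the reverse inclusion $\bigcap \Gamma \subseteq G_\eta$, I would take $\mathbf{x} \in \bigcap \Gamma$ and fix any pair $\alpha \preceq \beta \preceq \eta$. (The degenerate case $\alpha = \beta$ is immediate since $f_{\alpha\alpha}$ is the identity on $X_\alpha$, so I may assume $\alpha \prec \beta$.) Then $\{\alpha,\beta\}$ is a two-element finite subset of $D$ whose elements are both $\preceq \eta$, so $G'(\alpha,\beta) \in \Gamma$ and therefore $\mathbf{x} \in G'(\alpha,\beta)$. Unpacking the definition of $G'$ gives exactly $x_\alpha \in f_{\alpha\beta}(x_\beta)$. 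Since $\alpha \preceq \beta \preceq \eta$ was arbitrary, $\mathbf{x} \in G_\eta$.

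I do not expect any serious obstacle here, as the result is really a bookkeeping lemma about how the conditions defining $G_\eta$ decompose into pairwise conditions captured by the sets $G'(\alpha,\beta)$. The only tiny care needed is to respect the enumeration convention on finite subsets of $D$ (which is trivial for singletons and pairs) and to remember that total ordering of $D$ automatically makes any two-element subset of $\{\gamma : \gamma \preceq \eta\}$ comparable and hence eligible as an index for a member of $\Gamma$.
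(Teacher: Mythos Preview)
Your proposal is correct and follows essentially the same argument as the paper's proof: both directions are handled by direct definition-chasing, and in particular the reverse inclusion is obtained exactly as you suggest, by using the two-element set $\{\alpha,\beta\}$ to pick out $G'(\alpha,\beta)\in\Gamma$. Your added remarks on the ordering convention and the degenerate case $\alpha=\beta$ are harmless elaborations of points the paper leaves implicit.
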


\begin{proof} Let $\textbf{x} \in G_{\eta}$ and let $G'(\beta_1, \beta_2, \ldots, \beta_n)$ be an arbitrary member of $\Gamma$. Then because $\textbf{x} \in G_{\eta}$, we have $x_{\beta_i} \in f_{\beta_i \beta_j}(x_{\beta_j})$ for $1 \le i \le j \le n$. Thus, $\textbf{x} \in G'(\beta_1, \beta_2, \ldots, \beta_n)$, which implies $\textbf{x} \in \bigcap \Gamma$. On the other hand, suppose $\textbf{x} \in \bigcap \Gamma$. Then for any $\alpha \preceq \beta \preceq \eta$, because $\textbf{x} \in G'(\alpha, \beta)$, $x_{\alpha} \in f_{\alpha \beta}(x_{\beta})$. Thus, $\textbf{x} \in G_{\eta}$.
\end{proof}

\
The first main theorem of this section is an easy generalization of background Theorem \ref{background thm}. Note that the original theorem required $f_{\alpha \beta}$ to be Hausdorff continuum-valued (or $f_{\alpha \beta}(X_\beta)$ to be connected and $f^{-1}_{\alpha \beta}$ to be Hausdorff continuum-valued) for each $\alpha \preceq \beta$ in the index set $D$. This condition was imposed in order to force each $G(\beta_1, \beta_2, \ldots, \beta_n)$ to be connected, which was the key to the proof; however, there are more general circumstances in which $G(\beta_1, \beta_2, \ldots, \beta_n)$ also turns out to be connected, so the following theorem will be helpful. The proof requires only a minor adjustment to the original proofs of Theorems 124 and 125 given by Ingram and Mahavier in \cite{ingram mahavier}.

\begin{theorem} \label{finite connected}
Let $\{X_\alpha, f_{\alpha \beta}, D\}$ be a system for which each factor space is a Hausdorff continuum, each bonding function is u.s.c., and $D$ is a totally ordered set. Suppose that, for each finite subset $\{\beta_1, \beta_2, \ldots, \beta_n\}$ of $D$, $G(\beta_1, \beta_2, \ldots, \beta_n)$ is connected. Then $\varprojlim \textbf{f}$ is a Hausdorff continuum.
\end{theorem}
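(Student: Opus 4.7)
The plan is to show $\varprojlim \textbf{f}$ is connected by expressing it as a nested intersection of the sets $G_{\eta}$, each of which will be shown connected via Lemma \ref{gamma lemma}.

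First I would verify that every $G'(\beta_1, \ldots, \beta_n)$ is compact and connected. Compactness is guaranteed by Theorem 2.1. For connectedness, observe that its defining conditions involve only the coordinates $\beta_1, \ldots, \beta_n$, so $G'(\beta_1, \ldots, \beta_n)$ is (up to reordering of coordinates) the product $G(\beta_1, \ldots, \beta_n) \times \prod_{\alpha \notin \{\beta_1, \ldots, \beta_n\}} X_{\alpha}$. The first factor is connected by hypothesis and each remaining $X_{\alpha}$ is a Hausdorff continuum, so the full product is connected.

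Next, fix $\eta \in D$ and apply Lemma \ref{gamma lemma} to write $G_{\eta} = \bigcap \Gamma$. The family $\Gamma$ is downward directed by inclusion: for any two members $G'(\beta_1, \ldots, \beta_n)$ and $G'(\gamma_1, \ldots, \gamma_m)$ of $\Gamma$, the $G'$ indexed by the union of their index sets (all still $\preceq \eta$) lies in both. Thus $G_{\eta}$ is a downward-directed intersection of compact connected subsets of the compact Hausdorff product $\prod_{\alpha \in D} X_{\alpha}$, hence itself connected by the classical fact that such intersections remain connected in compact Hausdorff spaces.

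Finally, I would note that $\varprojlim \textbf{f} = \bigcap_{\eta \in D} G_{\eta}$, since $\textbf{x}$ lies in the inverse limit iff $\textbf{x} \in G_{\beta}$ for every $\beta \in D$. Because $D$ is totally ordered and $G_{\eta} \subseteq G_{\eta'}$ whenever $\eta' \preceq \eta$, this family is linearly nested, so the same classical fact again gives connectedness of the intersection. Combined with the non-emptiness and compactness supplied by Theorem 2.1, $\varprojlim \textbf{f}$ is a Hausdorff continuum. The main technical point to handle carefully is the classical fact just invoked: a downward-directed (equivalently, nested) intersection of compact connected subsets of a compact Hausdorff space is connected. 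The short proof uses normality to derive a contradiction from a hypothetical separation by disjoint open sets, and it is essentially the argument already used by Ingram and Mahavier in their original proofs of Theorems 124 and 125, so adapting it here is routine.
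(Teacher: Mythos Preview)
Your proposal is correct and follows essentially the same route as the paper's proof: show each $G'(\beta_1,\ldots,\beta_n)$ is a Hausdorff continuum via the product decomposition, use the directedness of $\Gamma$ together with Lemma~\ref{gamma lemma} to conclude each $G_\eta$ is a Hausdorff continuum, and then intersect the nested family $\{G_\eta\}$ to obtain the result. The paper invokes the same ``directed intersection of Hausdorff continua'' principle at both stages, just as you do.
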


\begin{proof}
Let $\eta \in D$ be fixed; we intend to show that $G_{\eta}$ is a Hausdorff continuum. Let $H = \{\beta_1, \beta_2, \ldots, \beta_n\}$ be a finite subset of $D$. Then $G'(\beta_1, \beta_2, \ldots, \beta_n)$ is homeomorphic to $G(\beta_1, \beta_2, \ldots, \beta_n) \times \prod_{\alpha \in D - H} X_\alpha$, a product of Hausdorff continua; therefore, $G'(\beta_1, \beta_2, \ldots, \beta_n)$ is a Hausdorff continuum. Let $\Gamma = \{G'(\beta_1, \beta_2, \ldots, \beta_n) \ | \ \{\beta_1, \beta_2, \ldots, \beta_n\}$ is a finite subset of $D$ with $\beta_{i} \preceq \eta$ for $1 \le i \le n \}$. Next, note that whenever $\{\beta_1, \beta_2, \ldots, \beta_n\}$ and $\{\sigma_1, \sigma_2, \ldots, \sigma_s\}$ are finite subsets of $D$, if we let $\{\tau_1, \tau_2, \ldots, \tau_{r}\} = \{\beta_1, \beta_2, \ldots, \beta_n\} \cup \{\sigma_1, \sigma_2, \ldots, \sigma_s\}$, then we may conclude that $G'(\tau_1, \tau_2, \ldots, \tau_{r}) \subseteq G'(\beta_1, \beta_2, \ldots, \beta_n) \cap G'(\sigma_1, \sigma_2, \ldots, \sigma_s)$. Thus, $\Gamma$ is a collection of Hausdorff continua with the property that the intersection of any finite subcollection of members of $\Gamma$ contains another member of $\Gamma$. It follows that $\bigcap \Gamma$ is a Hausdorff continuum. However, by Lemma \ref{gamma lemma}, $\bigcap \Gamma = G_{\eta}$. $G_{\eta}$ is therefore a Hausdorff continuum for each $\eta \in D$.

\
Finally, it is easy to see that $\varprojlim \textbf{f} = \bigcap_{\eta \in D}G_{\eta}$, but $\{G_{\eta} \ | \ \eta \in D\}$ is a nested collection of Hausdorff continua, so $\varprojlim \textbf{f}$ is also a Hausdorff continuum.
\end{proof}

We may apply the above theorem to obtain a characterization of connectedness in inverse limits with surjective bonding functions:

\begin{theorem} \label{finite connected iff}
Let $\{X_\alpha, f_{\alpha \beta}, D\}$ be a system for which each factor space is a Hausdorff continuum, each bonding function is u.s.c. and surjective, and $D$ is a totally ordered set. Then $\varprojlim \textbf{f}$ is a Hausdorff continuum iff for each finite subset $\{\beta_1, \beta_2, \ldots, \beta_n\}$ of $D$, $G(\beta_1, \beta_2, \ldots, \beta_n)$ is connected.
\end{theorem}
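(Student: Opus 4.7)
The plan is to prove the biconditional by handling each direction with the machinery already assembled in the excerpt. One direction is essentially a restatement of the preceding theorem, while the other relies on the cordiality lemma from Section 3.

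For the ``if'' direction, suppose each finite $G(\beta_1, \beta_2, \ldots, \beta_n)$ is connected. Since every factor space is a Hausdorff continuum, every bonding function is u.s.c., and $D$ is totally ordered, the hypotheses of Theorem \ref{finite connected} are satisfied and we conclude immediately that $\varprojlim \textbf{f}$ is a Hausdorff continuum. Surjectivity of the bonding functions plays no role in this direction.

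For the ``only if'' direction, suppose $\varprojlim \textbf{f}$ is a Hausdorff continuum, and fix a finite subset $H = \{\beta_1, \beta_2, \ldots, \beta_n\}$ of $D$. Because every $f_{\alpha\beta}$ is surjective, Lemma \ref{cordial} tells us that $\varprojlim \textbf{f}$ is cordial, so that
\[
\pi_{H}(\varprojlim \textbf{f}) = G(\beta_1, \beta_2, \ldots, \beta_n).
\]
Since $\pi_{H}$ is continuous and $\varprojlim \textbf{f}$ is connected, the left-hand side is connected, and therefore so is $G(\beta_1, \beta_2, \ldots, \beta_n)$. As $H$ was arbitrary, this completes the proof.

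There is no genuine obstacle here; the only substantive work was already done in Lemma \ref{cordial} (to guarantee that projections onto finitely many coordinates hit the full $G$-set) and in Theorem \ref{finite connected} (to lift finite-level connectedness to the full inverse limit). The surjectivity hypothesis is essential precisely because it is what turns the trivial containment $\pi_{H}(\varprojlim \textbf{f}) \subseteq G(\beta_1, \beta_2, \ldots, \beta_n)$ into equality, which is what lets the ``only if'' direction go through; without it, the constant-zero example mentioned in Section 3 shows that $G(\beta_1, \beta_2, \ldots, \beta_n)$ may fail to be connected even when $\varprojlim \textbf{f}$ is a singleton.
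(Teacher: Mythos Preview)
Your proof is correct and follows essentially the same approach as the paper: the ``if'' direction is an immediate appeal to Theorem~\ref{finite connected}, and the ``only if'' direction uses surjectivity via Lemma~\ref{cordial} to identify $G(\beta_1,\ldots,\beta_n)$ with the projection $\pi_H(\varprojlim \textbf{f})$, which is connected as the continuous image of a connected set. Your closing remarks on the necessity of surjectivity also mirror the paper's subsequent discussion.
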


\begin{proof} Suppose that, for each finite subset $\{\beta_1, \beta_2, \ldots, \beta_n\}$ of $D$, the set $G(\beta_1, \beta_2, \ldots, \beta_n)$ is connected.  By Theorem \ref{finite connected}, we may conclude that $\varprojlim \textbf{f}$ is a Hausdorff continuum. On the other hand, suppose  $\varprojlim \textbf{f}$ is a Hausdorff continuum. Since the bonding functions are surjective,  $\varprojlim \textbf{f}$ is cordial, and therefore for any finite subset $H = \{\beta_1, \beta_2, \ldots, \beta_n\}$ of $D$, $\pi_{H}(\varprojlim \textbf{f}) = G(\beta_1, \beta_2, \ldots, \beta_n)$, which implies that $G(\beta_1, \beta_2, \ldots, \beta_n)$ is connected.
\end{proof}

A similar characterization, this time in terms of $G_{\eta}$, is also worth noting. (This result was inspired by Theorem 2.1 in \cite{ingram intro}.)

\begin{theorem} Let $\{X_\alpha, f_{\alpha \beta}, D\}$ be a system for which each factor space is a Hausdorff continuum, each bonding function is u.s.c. and surjective, and $D$ is a totally ordered set. Then $\varprojlim \textbf{f}$ is a Hausdorff continuum iff $G_{\eta}$ is connected for each $\eta \in D$.
\end{theorem}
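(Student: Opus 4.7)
The plan is to treat the two implications separately, leaning on the $G$-set machinery of Section 2 and the cordiality argument of Section 3.

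For $(\Leftarrow)$, note that since $D$ is totally ordered, the family $\{G_\eta : \eta \in D\}$ is linearly ordered by reverse inclusion: $\eta_1 \preceq \eta_2$ gives $G_{\eta_2} \subseteq G_{\eta_1}$. Each $G_\eta$ is non-empty and compact by Theorem 1.1, and directly from the definitions one has $\varprojlim \textbf{f} = \bigcap_{\eta \in D} G_\eta$. If each $G_\eta$ is a Hausdorff continuum, then the nested intersection is too: were the intersection separated by disjoint open sets $U, V$ in the compact space $\prod_\alpha X_\alpha$, a standard finite-intersection argument applied to the nested family $\{G_\eta \setminus (U \cup V)\}$ would force some $G_\eta \subseteq U \cup V$, contradicting the connectedness of $G_\eta$.

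For $(\Rightarrow)$, fix $\eta \in D$ and let $D_\eta = \{\alpha \in D : \alpha \preceq \eta\}$. The conditions defining $G_\eta$ involve only coordinates in $D_\eta$, so $G_\eta$ factors as $\widetilde{G}_\eta \times \prod_{\alpha \in D \setminus D_\eta} X_\alpha$, where $\widetilde{G}_\eta := \pi_{D_\eta}(G_\eta)$. Since each $X_\alpha$ is connected, $G_\eta$ is connected iff $\widetilde{G}_\eta$ is. I will argue that $\widetilde{G}_\eta = \pi_{D_\eta}(\varprojlim \textbf{f})$; granted this, $\widetilde{G}_\eta$ is a continuous image of the Hausdorff continuum $\varprojlim \textbf{f}$ and hence connected.

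The inclusion $\pi_{D_\eta}(\varprojlim \textbf{f}) \subseteq \widetilde{G}_\eta$ is immediate. For the reverse, given $(x_\alpha)_{\alpha \in D_\eta} \in \widetilde{G}_\eta$, I would adapt the finite-intersection argument of Lemma \ref{cordial}. For each finite $M \subseteq D$ containing $\eta$, let $E_M$ be the closed subset of $\prod_{\alpha \in D} X_\alpha$ consisting of all $\textbf{y}$ with $y_\alpha = x_\alpha$ for $\alpha \in M \cap D_\eta$ and $y_\sigma \in f_{\sigma\tau}(y_\tau)$ for $\sigma \preceq \tau$ in $M$. To show $E_M$ is non-empty: set $y_\alpha = x_\alpha$ for $\alpha \in M \cap D_\eta$ (the bonding conditions among these hold because $(x_\alpha) \in \widetilde{G}_\eta$); enumerate the elements of $M$ strictly above $\eta$ as $\gamma_1 \prec \cdots \prec \gamma_k$; choose $y_{\gamma_1} \in f^{-1}_{\eta \gamma_1}(x_\eta)$ by surjectivity; and inductively pick $y_{\gamma_i} \in f^{-1}_{\gamma_{i-1} \gamma_i}(y_{\gamma_{i-1}})$ for $i \geq 2$; the remaining coordinates are arbitrary. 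The exactness identity $f_{\alpha \gamma_i} = f_{\alpha \gamma_{i-1}} \circ f_{\gamma_{i-1} \gamma_i}$ makes the required ``downward'' conditions $x_\alpha \in f_{\alpha \gamma_i}(y_{\gamma_i})$ propagate by induction, and analogous exactness arguments handle bonding conditions between non-consecutive $\gamma_i, \gamma_j$. Since $E_{M_1 \cup M_2} \subseteq E_{M_1} \cap E_{M_2}$, the family $\{E_M\}$ has the finite intersection property, so by compactness $\bigcap_M E_M$ is non-empty, and any element lies in $\varprojlim \textbf{f}$ and projects to $(x_\alpha)_{\alpha \in D_\eta}$. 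The main obstacle is precisely this bookkeeping between the fixed lower coordinates in $D_\eta$ and the inductively chosen coordinates above $\eta$, and exactness is the tool that makes it work.
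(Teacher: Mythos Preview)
Your argument is correct. The $(\Leftarrow)$ direction matches the paper verbatim. For $(\Rightarrow)$ you take a genuinely different route: the paper first invokes Theorem~\ref{finite connected iff} (the ``finite $G$-set'' characterization) to conclude that every $G(\beta_1,\ldots,\beta_n)$ is a Hausdorff continuum, hence every $G'(\beta_1,\ldots,\beta_n)$ is, and then applies Lemma~\ref{gamma lemma} to write $G_\eta$ as a directed intersection of such continua. You bypass this entirely by proving an ``infinite cordiality'' statement---that $\pi_{D_\eta}(\varprojlim \textbf{f})$ equals the restricted inverse limit $\widetilde{G}_\eta$---via a direct finite-intersection/compactness argument that extends Lemma~\ref{cordial} from finite coordinate sets to the initial segment $D_\eta$. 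Your approach is more self-contained (it does not rely on Theorems~\ref{finite connected} and~\ref{finite connected iff}) and yields as a byproduct the useful fact that the inverse limit surjects onto each $\widetilde{G}_\eta$; the paper's approach is shorter in context because it reuses machinery already in place.
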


\begin{proof} Since $\varprojlim \textbf{f} = \bigcap_{\eta \in D} G_{\eta}$, if $G_{\eta}$ is connected for each $\eta \in D$, then $\varprojlim \textbf{f}$ is the intersection of a nested collection of Hausdorff continua and is therefore a Hausdorff continuum. On the other hand, suppose $\varprojlim \textbf{f}$ is a Hausdorff continuum and $\eta \in D$. By Theorem \ref{finite connected iff}, when $\{\beta_1, \beta_2, \ldots, \beta_{n}\}$ is a finite subset of $D$, $G(\beta_1, \beta_2, \ldots, \beta_{n})$ is a Hausdorff continuum, which would mean that $G'(\beta_1, \beta_2, \ldots, \beta_{n})$ is also a Hausdorff continuum. So, if $\Gamma = \{G'(\beta_1, \beta_2, \ldots, \beta_n) \ | \ \{\beta_1, \beta_2, \ldots, \beta_n\}$ is a finite subset of $D$ with $\beta_{i} \preceq \eta$ for $1 \le i \le n \}$, then by the same argument given in the proof of Theorem \ref{finite connected}, $\bigcap \Gamma$ is a Hausdorff continuum. By Lemma \ref{gamma lemma}, $\bigcap \Gamma = G_{\eta}$.
\end{proof}

In the last two theorems, the hypothesis that each bonding function is surjective is necessary. To see this, consider the following example (which can also be found as Example 1.8 in \cite{ingram intro}). Let $f: [0,1] \rightarrow 2^{[0,1]}$ be given by $f(x) = 0$ for $0 \le x < 1$ and $f(1) = \{0, 1/2\}$, let the index set be the positive integers, and let $f_{i \ i+1} = f$ for each positive integer $i$. Then the inverse limit $\varprojlim \textbf{f}$ is the singleton $\{(0,0,0,\ldots)\}$, whereas $G(1,2)$, being homeomorphic to the graph of $f$, is not connected. $G_{2}$ is not connected for similar reasons. The source of the trouble here is that, because the bonding functions are not surjective, $\varprojlim \textbf{f}$ is not cordial, and so, $G(1,2)$ need not equal $\pi_{\{1,2\}}(\varprojlim \textbf{f})$. This example also shows that, in Theorem 2.1 of \cite{ingram intro}, the bonding functions should have been assumed to be surjective in order for $\varprojlim \textbf{f}$ being a continuum to imply that each $G_n$ is connected. (The converse, which is more commonly used, is true as it stands, without having to assume surjectivity.) Ingram has asked the author to make the readers of this paper aware of the error.

\
Next, we give a simple sufficient condition for non-connectedness in an inverse limit with surjective bonding functions. (This result generalizes an observation by Nall in \cite{nall connected}.) Once again, the previous example shows that the bonding functions in this theorem must be surjective.

\begin{theorem} Let $\{X_{\alpha}, f_{\alpha \beta}, D\}$ be a system for which each factor space is non-empty, compact and Hausdorff, each bonding function is u.s.c. and surjective, and $D$ is totally ordered. If, for some $\alpha \preceq \beta \in D$, the graph of $f_{\alpha \beta}$ is not connected, then $\varprojlim \textbf{f}$ is not connected.
\end{theorem}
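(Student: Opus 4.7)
The plan is to exploit cordiality in order to reduce connectedness of $\varprojlim \textbf{f}$ to connectedness of $G(\alpha,\beta)$, which we will identify (up to homeomorphism) with $Graph(f_{\alpha\beta})$.

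First I would invoke Lemma \ref{cordial}: since every bonding function is surjective and $D$ is totally ordered, $\varprojlim \textbf{f}$ is cordial. Applying this with the finite set $H=\{\alpha,\beta\}$ gives
\[
\pi_{H}(\varprojlim \textbf{f})=G(\alpha,\beta).
\]
Next I would observe that the ``coordinate-swap'' map $(x_\alpha,x_\beta)\mapsto (x_\beta,x_\alpha)$ is a homeomorphism of $X_\alpha\times X_\beta$ onto $X_\beta\times X_\alpha$ that carries $G(\alpha,\beta)$ bijectively onto $Graph(f_{\alpha\beta})$ (this is immediate from the defining condition $x_\alpha\in f_{\alpha\beta}(x_\beta)$; in the degenerate case $\alpha=\beta$ the map is of course the identity on the diagonal). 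Hence the hypothesis that $Graph(f_{\alpha\beta})$ is disconnected forces $G(\alpha,\beta)$ to be disconnected as well.

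Finally, I would argue by contradiction: if $\varprojlim \textbf{f}$ were connected, then, since the projection $\pi_{H}:\prod_{\alpha\in D}X_\alpha\to X_\alpha\times X_\beta$ is continuous, its image $\pi_{H}(\varprojlim \textbf{f})$ would be connected. But by the two previous steps this image equals $G(\alpha,\beta)$, which is disconnected. This contradiction yields the desired conclusion.

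The only subtlety worth flagging is the need for surjectivity: without it, cordiality can fail, and one can have $\pi_{H}(\varprojlim\textbf{f})\subsetneq G(\alpha,\beta)$ (exactly the phenomenon illustrated by the constant-$0$ example recalled in the paper). With surjectivity in hand, however, each step above is essentially a routine application of a result already in the paper, so I do not anticipate a genuine obstacle in the proof.
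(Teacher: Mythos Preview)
Your proof is correct and follows essentially the same route as the paper's: invoke cordiality (Lemma~\ref{cordial}) to identify $\pi_{\{\alpha,\beta\}}(\varprojlim\textbf{f})$ with $G(\alpha,\beta)$, note that $G(\alpha,\beta)$ is homeomorphic to $Graph(f_{\alpha\beta})$, and conclude by continuity of the projection. The paper's version is slightly terser but structurally identical, and your added remarks (the explicit coordinate-swap, the degenerate case, and the comment on why surjectivity matters) are all accurate elaborations.
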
  

\begin{proof} Suppose $\varprojlim \textbf{f}$ is connected. Then because it is cordial (by Lemma \ref{cordial}), $\pi_{\{\alpha, \beta\}}(\varprojlim \textbf{f}) = G(\alpha, \beta)$ is connected for every $\alpha \preceq \beta \in D$. Since, for each $\alpha \preceq \beta \in D$, $G(\alpha, \beta)$ is homeomorphic to $Graph(f_{\alpha \beta})$, the proof is complete.
\end{proof}

\
The reader should be warned that the converse of the previous theorem is not true, as illustrated by an example from Jonathan Meddaugh that is given in \cite{ingramnonconn}.

\
For the remainder of this section, we will focus on generalized inverse limits with a single idempotent surjective u.s.c. bonding function. In \cite{nall connected}, Van Nall presents a variety of theorems that give information about the connectedness of a generalized inverse limit (indexed by the positive integers) where there is some u.s.c. function $f$ so that $f_{i \  i+1} = f$ for each $i > 0$. Using techniques from the original proofs by Nall, we will reformulate two of those theorems in the setting of an inverse limit (indexed by some totally ordered set $D$) with a single idempotent u.s.c. bonding function $f$.

\
To prepare for the first of these theorems, we give a lemma that restates a key detail from Nall's original Theorem 3.1 in \cite{nall connected}, but in the context of this paper. (For the proof of this lemma, we refer the reader to Nall's paper.) Suppose $X$ is compact Hausdorff, $f: X \rightarrow 2^{X}$ is u.s.c., $\{\beta_1, \beta_2, \ldots, \beta_n\}$ (with $n \ge 2$) is a finite subset of a totally ordered set $D$, and $X_{\beta_i} = X$ for $1 \le i \le n$. Then let us define $K(\beta_1, \beta_2, \ldots, \beta_n) = \{(x_{\beta_{1}}, x_{\beta_{2}}, \ldots, x_{\beta_n}) \in \prod_{1 \le i \le n} X_{\beta_{i}} \ | \ x_{\beta_{i}} \in f(x_{\beta_{i+1}})$ for $1 \le i \le n-1\}$.

\begin{lemma} \label{Kn lemma} (Nall) Suppose $X$ is a metric continuum and $f: X \rightarrow 2^{X}$ is a surjective u.s.c. function whose graph is connected and is the union of a collection of u.s.c. functions, each of which has domain $X$ and maps into $C(X)$. Suppose also that $\{\beta_1, \beta_2, \ldots, \beta_n\}$ (with $n \ge 2$) is a finite subset of a totally ordered set $D$, and $X_{\beta_i} = X$ for $1 \le i \le n$. Then $K(\beta_1, \beta_2, \ldots, \beta_n)$ is a continuum for each integer $n \ge 2$.
\end{lemma}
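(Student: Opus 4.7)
The plan is to proceed by induction on $n$. For the base case $n=2$, the set $K(\beta_1,\beta_2) = \{(x_1,x_2) : x_1 \in f(x_2)\}$ is homeomorphic to $Graph(f)$ via coordinate swap, so it is a continuum by hypothesis.

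For the inductive step, assume $K(\beta_2,\ldots,\beta_n)$ is a continuum and let $\{g_\lambda\}_{\lambda \in \Lambda}$ be the given collection of u.s.c.\ maps $X \to C(X)$ whose graphs have union equal to $Graph(f)$. For each $\lambda \in \Lambda$, let $K_n^{\lambda} = \{(x_1,\ldots,x_n) \in K(\beta_1,\ldots,\beta_n) : x_1 \in g_\lambda(x_2)\}$. First I would verify that each $K_n^{\lambda}$ is a continuum. The projection $p_\lambda : K_n^{\lambda} \to K(\beta_2,\ldots,\beta_n)$ dropping the first coordinate is a continuous closed surjection of compact Hausdorff spaces (surjective because $g_\lambda(x_2) \neq \emptyset$ for every $x_2 \in X$), and its fiber over $(x_2,\ldots,x_n)$ is homeomorphic to $g_\lambda(x_2) \in C(X)$, hence connected. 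Invoking the standard fact that a closed surjection between compact Hausdorff spaces with a connected codomain and connected fibers has a connected domain, together with the inductive hypothesis, yields that $K_n^{\lambda}$ is a continuum. Moreover, every $(x_1,\ldots,x_n) \in K(\beta_1,\ldots,\beta_n)$ satisfies $x_1 \in f(x_2) = \bigcup_\lambda g_\lambda(x_2)$, so $K(\beta_1,\ldots,\beta_n) = \bigcup_{\lambda \in \Lambda} K_n^{\lambda}$.

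The main obstacle is showing that this union is itself connected. I would argue by contradiction: suppose $K(\beta_1,\ldots,\beta_n) = A \sqcup B$ is a separation into nonempty disjoint closed sets. Because each $K_n^{\lambda}$ is connected, $K_n^{\lambda} \subseteq A$ or $K_n^{\lambda} \subseteq B$, producing a partition $\Lambda = \Lambda_A \sqcup \Lambda_B$. Consider the projection $\pi_{12} : K(\beta_1,\ldots,\beta_n) \to K(\beta_1,\beta_2)$ and set $A' = \pi_{12}(A)$ and $B' = \pi_{12}(B)$. Both $A'$ and $B'$ are closed (being compact images in a Hausdorff space) and their union is all of $K(\beta_1,\beta_2)$. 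They are also disjoint: any common point $(x_1,x_2)$ would yield $\lambda_A \in \Lambda_A$ and $\lambda_B \in \Lambda_B$ with $x_1 \in g_{\lambda_A}(x_2) \cap g_{\lambda_B}(x_2)$, and using surjectivity of $f$ to back-iterate and select $x_3,\ldots,x_n$ with $x_i \in f(x_{i+1})$ would produce a point of $K_n^{\lambda_A} \cap K_n^{\lambda_B} \subseteq A \cap B = \emptyset$, a contradiction. Thus $A' \sqcup B'$ would be a separation of $K(\beta_1,\beta_2) \cong Graph(f)$, contradicting the hypothesized connectedness of $Graph(f)$. Hence $K(\beta_1,\ldots,\beta_n)$ is connected, and since it is closed in $X^n$ it is also compact, completing the induction.
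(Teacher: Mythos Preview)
Your proof is correct. The paper does not give its own proof of this lemma; it simply refers the reader to Nall's original argument (Theorem~3.1 in \cite{nall connected}) and observes that relabeling $1,2,\ldots,n$ as $\beta_1,\ldots,\beta_n$ changes nothing. Your induction---decomposing $K(\beta_1,\ldots,\beta_n)$ as $\bigcup_\lambda K_n^\lambda$, showing each $K_n^\lambda$ is a continuum via the closed surjection onto $K(\beta_2,\ldots,\beta_n)$ with connected fibers $g_\lambda(x_2)$, and then ruling out a separation by pushing it down to $K(\beta_1,\beta_2)\cong Graph(f)$---is essentially Nall's approach.

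One minor remark: in your disjointness step you invoke surjectivity of $f$ to manufacture a fresh tail $x_3,\ldots,x_n$, but this is unnecessary there. If $(x_1,x_2)\in A'\cap B'$ then you already have a witness $(x_1,x_2,a_3,\ldots,a_n)\in A$; since $x_1\in g_{\lambda_B}(x_2)$, that very point also lies in $K_n^{\lambda_B}\subseteq B$, contradiction. Surjectivity of $f$ is, however, genuinely needed to guarantee that $\pi_{12}$ maps onto all of $K(\beta_1,\beta_2)$, so that $A'\cup B'$ is the full (connected) graph.
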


Nall proved the above result in the case of $K(1,2,\ldots,n)$, but of course the same result holds if we replace the natural numbers $1,2,\ldots,n$ with other symbols $\beta_1, \beta_2, \ldots, \beta_n$ from a totally ordered set $D$. Also, not surprisingly, the set $K(\beta_1, \beta_2, \ldots, \beta_n)$ can be rewritten using the ``$G$'' notation of this paper:

\begin{lemma} \label{sets equal} Let $X$ be a non-empty compact Hausdorff space, and let $f: X \rightarrow 2^{X}$ be an idempotent surjective u.s.c. function. Suppose $\{X, f, D\}$ is a system with the single idempotent bonding function $f$ and a totally ordered index set $D$. If $\{\beta_1, \beta_2, \ldots, \beta_n\}$ (with $n \ge 2$) is a finite subset of $D$, then $K(\beta_1, \beta_2, \ldots, \beta_n) = G(\beta_1, \beta_2, \ldots, \beta_n)$.
\end{lemma}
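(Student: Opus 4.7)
The two sets are defined by conditions on the same coordinates, so I would simply check inclusion in each direction, with idempotence doing all the work on the harder side.

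The forward inclusion $G(\beta_1,\ldots,\beta_n)\subseteq K(\beta_1,\ldots,\beta_n)$ is immediate from the definitions. In the system $\{X,f,D\}$ we have $f_{\beta_i\beta_j}=f$ whenever $\beta_i\prec\beta_j$, so membership in $G$ demands $x_{\beta_i}\in f(x_{\beta_j})$ for every $1\le i<j\le n$; in particular the consecutive conditions $x_{\beta_i}\in f(x_{\beta_{i+1}})$ hold, which is exactly what $K$ requires.

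For the reverse inclusion, I would take $(x_{\beta_1},\ldots,x_{\beta_n})\in K$ and prove by induction on the ``gap'' $k=j-i\ge 1$ that $x_{\beta_i}\in f(x_{\beta_j})$. The base case $k=1$ is the defining condition of $K$. For the inductive step, assume $x_{\beta_i}\in f(x_{\beta_{i+k}})$; since also $x_{\beta_{i+k}}\in f(x_{\beta_{i+k+1}})$, the element $y=x_{\beta_{i+k}}$ witnesses, via the definition of composition, that $x_{\beta_i}\in (f\circ f)(x_{\beta_{i+k+1}})=f^2(x_{\beta_{i+k+1}})$. Applying idempotence, $f^2=f$, yields $x_{\beta_i}\in f(x_{\beta_{i+k+1}})$, completing the induction. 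The diagonal conditions $i=j$ in the definition of $G$ are automatic because $f_{\beta_i\beta_i}$ is the identity on $X_{\beta_i}$, so the tuple lies in $G(\beta_1,\ldots,\beta_n)$.

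There is no real obstacle here: the entire content of the lemma is the collapse $f^k=f$ for all $k\ge 1$, which is the defining feature of an idempotent u.s.c. function and which is precisely what lets the pairwise conditions in $G$ be reconstructed from the consecutive conditions in $K$. The proof is therefore a two-line observation followed by a short induction.
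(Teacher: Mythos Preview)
Your proof is correct and essentially identical to the paper's: both dispose of $G\subseteq K$ by restricting to consecutive indices, and for $K\subseteq G$ both use a short induction that collapses $f\circ f$ to $f$ via idempotence (the paper fixes $i$ and walks $j$ from $i+1$ upward, while you induct on the gap $j-i$, which is the same argument in slightly different packaging).
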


\begin{proof} If $(x_{\beta_1}, x_{\beta_2}, \ldots, x_{\beta_n}) \in G(\beta_1, \beta_2, \ldots, \beta_n)$, then because we have $x_{\beta_i} \in f_{\beta_{i} \beta_{i+1}}(x_{\beta_{i+1}}) = f(x_{\beta_{i+1}})$ for $1 \le i \le n-1$, it follows that $(x_{\beta_1}, x_{\beta_2}, \ldots, x_{\beta_n}) \in K(\beta_1, \beta_2, \ldots, \beta_n)$. Now let $(x_{\beta_1}, x_{\beta_2}, \ldots, x_{\beta_n}) \in K(\beta_1, \beta_2, \ldots, \beta_n)$. Clearly $x_{\beta_i} \in f_{\beta_{i}\beta_{i+1}}(x_{\beta_{i+1}})$ for $1 \le i < n$. So now, for a fixed $i$ with $1 \le i < n$, we will show inductively that $x_{\beta_i} \in f_{\beta_{i}\beta_{j}}(x_{\beta_j})$ when $i < j \le n$. Suppose we have shown $x_{\beta_i} \in  f_{\beta_{i}\beta_{k}}(x_{\beta_k})$ for some $k$ with $i+1 \le k < n$. Then since $x_{\beta_{k}} \in f_{\beta_{k} \beta_{k+1}}(x_{\beta_{k+1}})$, we know $x_{\beta_{i}} \in f_{\beta_{i}\beta_{k}} \circ f_{\beta_{k}\beta_{k+1}}(x_{\beta_{k+1}})$. However, $f_{\beta_{i}\beta_{k}} \circ f_{\beta_{k}\beta_{k+1}} = f \circ f = f = f_{\beta_{i} \beta_{k+1}}$, so we have $x_{\beta_{i}} \in f_{\beta_{i}\beta_{k+1}}(x_{\beta_{k+1}})$. Thus, $x_{\beta_{i}} \in f_{\beta_{i}\beta_{j}}(x_{\beta_{j}})$ for all $1 \le i \le j \le n$, and $(x_{\beta_1}, x_{\beta_2}, \ldots, x_{\beta_n}) \in G(\beta_1, \beta_2, \ldots, \beta_n)$.
\end{proof}

\begin{theorem} \label{nall lemma} Suppose $X$ is a metric continuum and $f: X \rightarrow 2^{X}$ is an idempotent surjective u.s.c. function whose graph is connected and is the union of a collection of u.s.c. functions, each of which has domain $X$ and maps into $C(X)$. Let $\{X, f, D\}$ be a system with the single idempotent bonding function $f$ and a totally ordered index set $D$. Then the inverse limit $\varprojlim \textbf{f}$ of the system $\{X, f, D\}$ is a Hausdorff continuum.
\end{theorem}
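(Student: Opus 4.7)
The plan is to verify the hypothesis of Theorem \ref{finite connected} by checking that every finite ``$G$-set'' is connected, and then let that theorem do the work of passing from finite projections up to the full inverse limit. Since all the heavy lifting has already been set up in Lemmas \ref{Kn lemma} and \ref{sets equal}, the argument reduces to stitching together previous results.

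First I would dispose of the trivial singleton case: for any $\beta_1 \in D$, the set $G(\beta_1)$ is just $X_{\beta_1} = X$, which is a Hausdorff continuum by hypothesis. Next, for any finite subset $\{\beta_1, \beta_2, \ldots, \beta_n\}$ of $D$ with $n \ge 2$, I would invoke Nall's Lemma \ref{Kn lemma}, which applies directly because $X$ is a metric continuum and $f$ satisfies the required structural condition (connected graph, expressible as a union of u.s.c.\ functions into $C(X)$); this yields that $K(\beta_1, \beta_2, \ldots, \beta_n)$ is a continuum. Then Lemma \ref{sets equal}, whose hypotheses are met because $f$ is idempotent and surjective, identifies $K(\beta_1, \beta_2, \ldots, \beta_n)$ with $G(\beta_1, \beta_2, \ldots, \beta_n)$, so $G(\beta_1, \beta_2, \ldots, \beta_n)$ is connected.

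Finally, with every finite $G(\beta_1, \beta_2, \ldots, \beta_n)$ shown to be connected, the hypotheses of Theorem \ref{finite connected} are satisfied, and so $\varprojlim \mathbf{f}$ is a Hausdorff continuum. There is no genuine obstacle here; the main point is simply that the previous two lemmas were designed precisely so that Nall's metric-sequential result could be combined with the totally-ordered framework of Section 3 through the idempotent hypothesis. The only thing I would be careful about is making explicit that the use of Lemma \ref{sets equal} is exactly where the idempotence of $f$ does its essential job: without $f \circ f = f$, the compositional equalities $f_{\beta_i \beta_k} \circ f_{\beta_k \beta_{k+1}} = f_{\beta_i \beta_{k+1}}$ would fail and the containment $K \subseteq G$ in that lemma would break down.
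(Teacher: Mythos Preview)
Your proposal is correct and follows essentially the same approach as the paper: handle the singleton case $G(\beta_1)=X$ separately, then for $n\ge 2$ use Lemma~\ref{sets equal} to identify $G(\beta_1,\ldots,\beta_n)$ with $K(\beta_1,\ldots,\beta_n)$, invoke Lemma~\ref{Kn lemma} for connectedness, and finish with Theorem~\ref{finite connected}. Your added remark about idempotence being precisely what makes the $K\subseteq G$ containment work is accurate and a nice touch.
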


\begin{proof} By Theorem \ref{finite connected}, it suffices to show that for each finite subset $\{\beta_1, \beta_2, \ldots, \beta_n\}$ of $D$, $G(\beta_1, \beta_2, \ldots, \beta_n)$ is connected. Of course, $G(\beta_1) = X$ is connected, so assume $\{\beta_1, \beta_2, \ldots, \beta_n\}$ is a finite subset of $D$ with $n \ge 2$. By Lemma \ref{sets equal}, $G(\beta_1, \beta_2, \ldots, \beta_n) = K(\beta_1, \beta_2, \ldots, \beta_n)$. However, by Lemma \ref{Kn lemma}, $K(\beta_1, \beta_2, \ldots, \beta_n)$ is a continuum, so the proof is complete.
\end{proof}

Lemma \ref{inverse} stated that if $f$ is u.s.c., idempotent and surjective, then so is $f^{-1}$. Thus, assuming $X$ is a non-empty compact Hausdorff space and $f: X \rightarrow 2^{X}$ is u.s.c., idempotent, and surjective, then if $D$ is totally ordered, not only is the inverse limit $\varprojlim \textbf{f}$ of the system $\{X, f, D\}$ defined, but also the inverse limit $\varprojlim \textbf{f}^{\textbf{--1}}$ of the system $\{X, f^{-1}, D\}$. This is the basis for the last theorem of this section, which retains the flavor of Theorem 3.3 in \cite{nall connected}.

\begin{theorem} Suppose $X$ is a Hausdorff continuum, $f: X \rightarrow 2^{X}$ is an idempotent surjective u.s.c. function, and $D$ is a totally ordered set. Let $\varprojlim \textbf{f}$ be the inverse limit of the system $\{X, f, D\}$, and let $\varprojlim \textbf{f}^{\textbf{--1}}$ be the inverse limit of the system $\{X, f^{-1}, D\}$. Then $\varprojlim \textbf{f}$ is a Hausdorff continuum if and only if $\varprojlim\textbf{f}^{\textbf{--1}}$ is a Hausdorff continuum.

\end{theorem}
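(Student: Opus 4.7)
The plan is to reduce the biconditional, via Theorem \ref{finite connected iff}, to a statement about the finite-coordinate sets $G(\beta_1, \ldots, \beta_n)$, and then exhibit an obvious homeomorphism between those sets for the two systems. First I would verify that $\{X, f^{-1}, D\}$ is a system of the same form as $\{X, f, D\}$: by Lemma \ref{inverse} the function $f^{-1}$ is idempotent; it is u.s.c.\ and has domain $X$ by the definition of the inverse of a surjective u.s.c.\ function; and $f^{-1}$ is itself surjective, since for each $x \in X$ we have $(f^{-1})^{-1}(x) = f(x)$, which is non-empty because $f$ is a total function into $2^X$. Since $(f^{-1})^{-1} = f$, the biconditional is symmetric, so I only need to prove one direction.

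Next, I would apply Theorem \ref{finite connected iff} to both systems at once: each has Hausdorff continuum factor spaces, u.s.c.\ surjective bonding functions, and totally ordered index set, so in each case the inverse limit is a Hausdorff continuum if and only if $G(\beta_1, \ldots, \beta_n)$ is connected for every finite $\{\beta_1, \ldots, \beta_n\} \subseteq D$. Thus it suffices to show that, for every such finite subset, $G_f(\beta_1, \ldots, \beta_n)$ is connected if and only if $G_{f^{-1}}(\beta_1, \ldots, \beta_n)$ is connected.

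The key step is to produce a homeomorphism between these two finite-coordinate sets using coordinate reversal. Define $\phi: \prod_{i=1}^{n} X_{\beta_i} \to \prod_{i=1}^{n} X_{\beta_i}$ by $\phi(x_{\beta_1}, x_{\beta_2}, \ldots, x_{\beta_n}) = (x_{\beta_n}, x_{\beta_{n-1}}, \ldots, x_{\beta_1})$. This is plainly a self-homeomorphism of the product. The content is checking that $\phi$ carries $G_f(\beta_1, \ldots, \beta_n)$ onto $G_{f^{-1}}(\beta_1, \ldots, \beta_n)$: the defining condition $x_{\beta_i} \in f(x_{\beta_j})$ for $i \le j$ is logically equivalent to $x_{\beta_j} \in f^{-1}(x_{\beta_i})$ for $i \le j$, and setting $y_{\beta_k} = x_{\beta_{n-k+1}}$ converts this into the defining condition $y_{\beta_i} \in f^{-1}(y_{\beta_j})$ for $i \le j$ on the reversed tuple. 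Since $\phi$ is its own inverse, a symmetric check shows it sends $G_{f^{-1}}$ back onto $G_f$.

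With this homeomorphism in hand, connectedness of $G_f(\beta_1, \ldots, \beta_n)$ is equivalent to that of $G_{f^{-1}}(\beta_1, \ldots, \beta_n)$ for every finite subset of $D$, and Theorem \ref{finite connected iff} yields the desired biconditional. The main obstacle here is really just notational bookkeeping --- ensuring that the idempotence of $f$ (and hence $f^{-1}$) and the surjectivity of both $f$ and $f^{-1}$ are in place so that Theorem \ref{finite connected iff} applies symmetrically to both systems. There is no analytic content beyond recognizing that the two systems are mirror images of each other under reversing the order of finitely many coordinates.
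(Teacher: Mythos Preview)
Your proposal is correct and follows essentially the same route as the paper: both arguments hinge on the coordinate-reversal homeomorphism between $G_f(\beta_1,\ldots,\beta_n)$ and $G_{f^{-1}}(\beta_1,\ldots,\beta_n)$, together with the finite-projection characterization of connectedness. The only cosmetic difference is that the paper invokes cordiality directly for one direction and Theorem \ref{finite connected} for the other, whereas you package both directions into a single appeal to Theorem \ref{finite connected iff}; your explicit verification that $f^{-1}$ is surjective (so that Theorem \ref{finite connected iff} applies to the second system) is a point the paper leaves to the preceding discussion.
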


\begin{proof} We employ a similar proof technique as the one used by Nall. Let $\{\beta_1, \beta_2, \ldots, \beta_{n}\}$ be a finite subset of $D$. If $\varprojlim \textbf{f}$ is a Hausdorff continuum, then because it is cordial, we may conclude that the set $G_{f}(\beta_1, \beta_2, \ldots, \beta_{n})$ is a Hausdorff continuum. Also, $(x_{\beta_{1}}, x_{\beta_{2}}, \ldots, x_{\beta_{n}}) \in G_{f}(\beta_1, \beta_2, \ldots, \beta_{n})$ if and only if $(x_{\beta_{n}}, x_{\beta_{n-1}}, \ldots, x_{\beta_{1}}) \in G_{f^{-1}}(\beta_1, \beta_2, \ldots, \beta_{n})$. (Justification: $x_{\beta_i} \in f(x_{\beta_{j}})$ for each $1 \le i < j \le n$ if and only if $x_{\beta_j} \in f^{-1}(x_{\beta_i})$ for each $1 \le i < j \le n$.) Thus, the map that reverses the order of the sequence $(x_{\beta_{1}}, x_{\beta_2}, \ldots, x_{\beta_{n}})$ is a homeomorphism from $G_{f}(\beta_1, \beta_2, \ldots, \beta_{n})$ to $G_{f^{-1}}(\beta_1, \beta_2, \ldots, \beta_{n})$, and therefore, $G_{f^{-1}}(\beta_1, \beta_2, \ldots, \beta_{n})$ is a Hausdorff continuum. We apply Theorem \ref{finite connected} to conclude that $\varprojlim \textbf{f}^{\textbf{--1}}$ is a Hausdorff continuum. The argument is easily reversed to obtain the other direction of the equivalence.
\end{proof}

\section{\bf Examples}

As we will show, in each of the following examples, the given surjective u.s.c. function $f: [0,1] \rightarrow 2^{[0,1]}$ is idempotent. Thus, if $D$ is a totally ordered set, the inverse limit $\varprojlim \textbf{f}$ of the system $\{X, f, D\}$ with the single bonding function $f$ is defined. We will use a limit ordinal $\gamma \ge \omega$ for our index set $D$ in these examples. (Recall that an ordinal $\gamma$ is equal to the set of its predecessors. Although the index sets of inverse limits typically start at $1$ because the positive integers are so often used, when we use ordinals our index set will start with $0$. For more background material on ordinals, see, e.g., \cite{Kunen}.) Of course, if $\gamma = \omega$, then $\varprojlim \textbf{f}$ is homeomorphic to the usual generalized inverse limit indexed by the positive integers. Versions of the first four examples have been studied previously by others (e.g., Ingram in \cite{ingram intro}) in that setting; however, as we shall see below, some striking changes can occur when larger limit ordinals are chosen for the index set---especially $\gamma \ge \omega_1$.

\begin{example} Let $f:[0,1] \rightarrow C([0,1])$ be given by $f(0) = [0,1]$ and $f(x) = x$ for each $x \ne 0$. ($f$ is idempotent by Lemma \ref{idempotent lemma}, part 3.) The inverse limit $\varprojlim \textbf{f}$ of the system $\{[0,1], f, \gamma\}$ is a fan that is the union of one arc for each ordinal $\beta$ with $1 \le \beta \le \gamma$, all intersecting at the vertex $(0,0,0,\ldots)$.
\end{example}

\begin{proof} For $1 \le \beta \le \gamma$, let $A_{\beta} = \{ \textbf{x} \in \prod_{\alpha < \gamma} [0,1] \ | \ x_{\alpha} = x_{0}$ if $\alpha < \beta$ and $x_{\alpha} = 0$ if $\beta \le \alpha < \gamma\}$. Then for $1 \le \beta \le \gamma$, $A_{\beta}$ is an arc and $\varprojlim \textbf{f} = \bigcup_{1 \le \beta \le \gamma} A_{\beta}$. Note that $\bigcap_{1 \le \beta \le \gamma} A_{\beta} = \{(0,0,0, \ldots)\}$. 
\end{proof}

\begin{example} \label{long line} Let $f:[0,1] \rightarrow C([0,1])$ be given by $f(0) = [0,1]$ and $f(x) = 1$ for each $x \ne 0$. ($f$ is idempotent by Lemma \ref{idempotent lemma}, part 4.) The inverse limit $\varprojlim \textbf{f}$ of the system $\{[0,1], f, \gamma\}$ is homeomorphic to the set $(\gamma \times [0,1)) \cup \{(\gamma, 0)\}$ with the lexicographic order topology. (If $\gamma = \omega_1$, this space is the traditional compactified long line.)
\end{example}

\begin{proof} Let $Y$ be the space $(\gamma \times [0,1)) \cup \{(\gamma, 0)\}$ with the lexicographic order topology, and denote the points $(0,0,0,\ldots)$ and $(1,1,1,\ldots)$ of $\varprojlim \textbf{f}$ by $\textbf{0}$ and $\textbf{1}$, respectively. Let $h: \varprojlim \textbf{f} \rightarrow Y$ be given as follows: suppose $\textbf{x} = (x_{\alpha})_{\alpha < \gamma} \in \varprojlim \textbf{f}$. If $\textbf{x} = \textbf{1}$, then $h(\textbf{x}) = (\gamma, 0)$; if $\textbf{x} \ne \textbf{1}$, then $h(\textbf{x}) = (\beta, x_{\beta})$, where $\beta$ is the least ordinal $< \gamma$ such that $x_{\beta} \ne 1$. We intend to show that $h$ is a homeomorphism.

\
Let $\textbf{x}, \textbf{y} \in \varprojlim \textbf{f}$ with $\textbf{x} \ne \textbf{y}$. If exactly one of \textbf{x} or \textbf{y} is \textbf{1}, then clearly $h(\textbf{x}) \ne h(\textbf{y})$. So, suppose both \textbf{x} and \textbf{y} are not \textbf{1}. Then $h(\textbf{x}) = (\beta_{1}, x_{\beta_{1}})$ and $h(\textbf{y}) = (\beta_{2}, x_{\beta_{2}})$ for some $\beta_1 , \beta_2 < \gamma$. If $\beta_{1} \ne \beta_{2}$, then of course $h(\textbf{x}) \ne h(\textbf{y})$. If $\beta_1 = \beta_2$, then by the way $f$ and $h$ were defined, $x_{\alpha} = y_{\alpha} = 1$ for all $\alpha < \beta_1$, and $x_{\alpha} = y_\alpha = 0$ for all $\alpha > \beta_1$. However, $\textbf{x} \ne \textbf{y}$, and that forces $x_{\beta_1} \ne y_{\beta_1}$, which means $h(\textbf{x}) \ne h(\textbf{y})$. So, $h$ is one-to-one. To show $h$ is onto, we recall that $h(\textbf{1}) = (\gamma, 0)$, so let $(\beta, t) \in Y - \{(\gamma, 0)\}$. Then if $\textbf{x} = (x_{\alpha})_{\alpha < \gamma}$ is the element of the inverse limit that satisfies $x_{\alpha} = 1$ for all $\alpha < \beta$, $x_{\beta} = t$, and $x_{\alpha} = 0$ for all $\alpha > \beta$, we have $h(\textbf{x}) = (\beta, t)$. So, $h$ is indeed onto.

\
Finally, to show $h$ is continuous, we let $\textbf{x} \in \varprojlim \textbf{f}$ and let $V$ be an open set in $Y$ containing $h(\textbf{x})$. We consider the subcase where $h(\textbf{x}) = (\beta, x_{\beta})$, where $\beta < \gamma$ and $0 < x_{\beta} < 1$. Then $V$ contains an open interval of form $((\beta, s), (\beta, t))$ where $0 < s < x_{\beta} < t < 1$. Let $U = (\prod_{\alpha < \gamma} U_{\alpha}) \cap \varprojlim \textbf{f}$, where $U_{\alpha} = [0,1]$ for all $\alpha \ne \beta$ and $U_{\beta} = (s,t)$. Then $U$ is open in $\varprojlim \textbf{f}$ and $\textbf{x} \in U$. To see that $h(U) \subseteq V$, let $\textbf{y} \in U$ and note that $y_{\beta} \in (s,t)$. Since $0 < s < y_{\beta} < t < 1$, we may conclude $y_{\alpha} = 1$ for all $\alpha < \beta$, and that means $h(\textbf{y}) = (\beta, y_{\beta})$, which lies in $((\beta, s), (\beta, t)) \subseteq V$. The remaining subcases (i.e., when $h(\textbf{x}) = (\beta,0)$ for some $0 \le \beta \le \gamma$), though slightly more complicated, are similar and will be left to the reader.

\
Thus, $h$ is one-to-one, onto, and continuous, so $h^{-1}$ is also continuous and $h$ is a homeomorphism.
\end{proof}

A similar argument can be used for the following example, so we omit the proof.

\begin{example} Let $f:[0,1] \rightarrow C([0,1])$ be given by $f(0) = [0, 1/2], f(1) = [1/2, 1]$, and $f(x) = 1/2$ whenever $0 < x < 1$. ($f$ can be shown to be idempotent directly, or by applying Lemma \ref{idempotent lemma}, part 1.) The inverse limit $\varprojlim \textbf{f}$ of the system $\{[0,1], f, \gamma\}$ is an arc homeomorphic to the union of two copies of the space produced in Example \ref{long line} intersecting at the common compactification point $(1/2, 1/2, 1/2, \ldots)$.
\end{example}

It may be interesting to note that in the previous two examples, if $\gamma$ is chosen to be any limit ordinal $< \omega_1$, then the inverse limit is simply a metric arc. Only once $\gamma = \omega_1$ is chosen do we get a non-metric arc.

\begin{example} Let $f: [0,1] \rightarrow 2^{[0,1]}$ be given by $f(x) = \{x, 1-x\}$ for each $x \in [0,1]$. ($f$ is idempotent by Lemma \ref{idempotent lemma2}, part 3.) The inverse limit $\varprojlim \textbf{f}$ of the system $\{[0,1], f, \gamma\}$ is a cone over the set $\{0,1\}^{\gamma}$ with vertex $(1/2, 1/2, 1/2, \ldots)$.
\end{example}

\begin{proof} For a given $\textbf{y} \in \{t,1-t\}^{\gamma}$ and $a \in [0,1]$, let $\textbf{y}(a)$ denote the sequence $\textbf{y}$ with each $t$ replaced by $a$. For a given $\textbf{y} \in \{t,1-t\}^{\gamma}$, let $A_{\textbf{y}} = \{ \textbf{y}(a) \ | \ a \in [0,1]\}$. Then $\varprojlim \textbf{f} = \bigcup_{\textbf{y} \in \{t,1-t\}^{\gamma}} A_{\textbf{y}}$. Note that, for each $\textbf{y}$, $A_{\textbf{y}}$ is an arc from $\textbf{y}(0)$ to $\textbf{y}(1)$ and whenever $\textbf{y}, \textbf{z} \in \{t,1-t\}^{\gamma}$ with $\textbf{y} \ne \textbf{z}$, $A_{\textbf{y}} \cap A_{\textbf{z}} = \{(1/2, 1/2, 1/2, \ldots)\}$.
\end{proof}

\begin{example} Let $f: [0,1] \rightarrow 2^{[0,1]}$ satisfy the hypothesis of Lemma \ref{K lemma} (so $f$ is idempotent), with the additional requirement that $K$ is a continuum containing the point $(a,a)$. Then the inverse limit $\varprojlim \textbf{f}$ of the system $\{[0,1], f, \gamma\}$ is a Hausdorff continuum. (This continuum contains a fan of copies of $K$, with one copy of $K$ for each ordinal $1 \le \beta < \gamma$.)
\end{example}

\begin{proof} For $1 \le \beta < \gamma$, let $A_{\beta} = \{ \textbf{x} \in \varprojlim \textbf{f} \ | \ \exists (s,t) \in K$ with $x_{\alpha} = t \ \forall \alpha < \beta$ and $x_{\alpha} = s \ \forall \alpha \ge \beta\}$. Let $B = \{ \textbf{x} \in \varprojlim \textbf{f} \ | \ \exists t \in [0,1] $ such that $x_{\alpha} = t \ \forall \alpha < \gamma\}$. Then $\varprojlim \textbf{f} = (\bigcup_{1 \le \beta < \gamma}A_{\beta}) \cup B$. Each $A_{\beta}$ can be seen to be homeomorphic to $K$, a continuum, and $B$ is an arc. Since the sequence $\textbf{a} = (a)_{\alpha < \gamma}$ is an element of each $A_{\beta}$ as well as $B$, $\varprojlim \textbf{f}$ is connected.
\end{proof}

\section{\bf Conclusion}

Virtually any question that has been stated for inverse limits indexed by the positive integers has an analogue for inverse limits indexed by totally ordered sets, so there are ample opportunities for further research. A number of problems (stated mainly for inverse limits indexed by the positive integers) can be found in \cite{ingramproblemspaper} and in Chapter 6 of \cite{ingram intro}. One question of interest would be the following:

\begin{question} Choose some totally ordered set $D$. What continua are homeomorphic to an inverse limit with a single u.s.c. idempotent surjective bonding function $f: [0,1] \rightarrow 2^{[0,1]}$ with index set $D$?
\end{question}

Choosing index sets other than the positive integers can have surprising effects on this problem. For example, Patrick Vernon used the integers as his index set to produce a 2-cell in \cite{vernon}, whereas Van Nall proved (in \cite{nall 2cell}) that a 2-cell cannot be produced if the index set consists only of the positive integers. (The bonding function used in Vernon's example was not idempotent, however.)
\

Let us also state a much more open-ended question: 

\begin{question} If $\varprojlim \textbf{f}$ is the inverse limit of a system $\{[0,1], f, D\}$ with a single u.s.c. idempotent surjective bonding function $f: [0,1] \rightarrow 2^{[0,1]}$ and totally ordered index set $D$, what can be said of $\varprojlim \textbf{f}$?
\end{question}

When the index set $D$ is large, finding collections of u.s.c. functions $\textbf{f}$ that are exact (without being trivial, e.g., by making almost all of the bonding functions be the identity) remains a difficult problem. Using a single idempotent bonding function is only one possible solution. The collection of continuous bonding functions used by Michel Smith in \cite{smith} is exact, although the factor spaces become increasingly complicated as one moves deeper into the index set $D$.

\begin{question} What other techniques are there for generating non-trivial collections of u.s.c. (surjective) functions that are exact?
\end{question}

The examples section showed how strongly the choice of index set $D$ can affect the properties of the inverse limit space. So, we close with another open-ended question:

\begin{question} If the index set $D$ has a given property $P$, under what conditions (and to what degree) does that impact the properties of $\varprojlim \textbf{f}$?
\end{question}

\bibliographystyle{plain}

\begin{thebibliography}{10}

\smallskip

\bibitem{bellamy} David Bellamy, {\it Indecomposable continua with one and two composants}, Fund. Math. 101 (1978), no. 2, 129-134.

\smallskip

\bibitem{char roe} Wlodzimierz J. Charatonik and Robert P. Roe, {\it On Mahavier Products}, Topology and its Applications, 
166, (2014), 92-97.

\smallskip

\bibitem{ingram mahavier} W. T. Ingram, William S. Mahavier, {\it Inverse Limits: from Continua to Chaos}, Springer, Developments in Mathematics (vol. 25), 2012.

\smallskip

\bibitem{i m paper} W. T. Ingram, William S. Mahavier, {\it Inverse limits of upper semi-continuous set valued functions}, Houston Journal of Mathematics, vol. 32 (2006) no. 1, 119-130.

\smallskip

\bibitem{ingramnonconn} W. T. Ingram, {\it Concerning Nonconnected Inverse Limits with Upper Semi-Continous Set-Valued Functions}, Topology Proceedings vol. 40 (2012), 203-214.

\smallskip

\bibitem{ingram intro} W. T. Ingram, {\it An introduction to inverse limits with set-valued functions}, Springer Briefs in Mathematics, 2012.

\smallskip

\bibitem{ingramproblemspaper} W. T. Ingram, {\it Inverse Limits with Upper Semi-Continuous Bonding Functions: Problems and Some Partial Solutions}, Topology Proceedings, vol. 34 (1) (2010), 353-373.

\smallskip

\bibitem{Kunen} Kenneth Kunen, {\it Set Theory: An Introduction to Independence Proofs}, Elsevier B.V., 1980.

\smallskip

\bibitem{nall 2cell} Van Nall, {\it Inverse limits with set valued functions}, Houston Journal of Mathematics, 37 (2011), no. 4, 1323-1332.

\smallskip

\bibitem{nall connected} Van Nall, {\it Connected inverse limits with a set-valued function}, Topology Proc. 40 (2012), 167-177.

\smallskip

\bibitem{smith}  Michel Smith, {\it Generating Large Indecomposable Continua}, Pacific Journal, vol. 62, No.2 (1976), 587-593. 

\smallskip

\bibitem{varagona1} Scott Varagona, {\it Simple techniques for detecting decomposability or indecomposability of generalized inverse limits}, Ph.D. dissertation (Auburn University), 2012.

\smallskip

\bibitem{vernon} Patrick Vernon, {\it Inverse limits of set-valued functions indexed by the integers}, Topology Applications 171 (2014), 35-40.
\end{thebibliography}

\end{document}